\setlist[enumerate]{label=(\roman*)}
\newcommand\mm{\mathfrak m}
\newcommand{\R}{\mathrm{R}} 
\newcommand{\RR}{\mathscr{R}}
\renewcommand{\exp}{e}
\newcommand{\B}{\mathfrak B}
\newcommand{\D}{\mathscr D}
\newcommand\blfootnote[1]{%
  \begingroup
  \renewcommand\thefootnote{}\footnote{#1}%
  \addtocounter{footnote}{-1}%
  \endgroup
}
\DeclareMathOperator{\m}{m}
\DeclareMathOperator{\mr}{mr}
\DeclareMathOperator{\Em}{em}
\DeclareMathOperator{\emr}{emr}
\DeclareMathOperator{\E}{E}
\DeclareMathOperator{\Mat}{Mat}
\DeclareMathOperator{\diag}{diag}
\DeclareMathOperator{\GL}{GL}
\DeclareMathOperator{\SL}{SL}
\DeclareMathOperator{\SO}{SO}
\DeclareMathOperator{\Sp}{Sp}
\DeclareMathOperator{\PSp}{PSp}
\DeclareMathOperator{\PSO}{PSO}
\DeclareMathOperator{\Spin}{Spin}
\DeclareMathOperator{\HSpin}{HSpin}
\DeclareMathOperator{\Gm}{\mathbb{G}_m}
\DeclareMathOperator{\PGL}{PGL}
\numberwithin{equation}{section}
\newtheorem{thm}[equation]{Theorem}
\newtheorem{lem}[equation]{Lemma}
\newtheorem{cor}[equation]{Corollary}
\newtheorem{prop}[equation]{Proposition}
\theoremstyle{definition}
\newtheorem{defn}[equation]{Definition}
\theoremstyle{remark}
\newtheorem{rems}[equation]{Remarks}
\title[On the exponent of geometric unipotent radicals ]{On the exponent of geometric unipotent radicals of pseudo-reductive groups}
\author{Falk Bannuscher}
\address
{Fakult\"at f\"ur Mathematik, Ruhr-Universit\"at Bochum, D-44780 Bochum,Germany}
\email{falk.bannuscher@rub.de}
\author{Maike Gruchot}
\address{Lehrstuhl f\"ur Algebra und Zahlentheorie, RWTH Aachen University, D-52062 Aachen, Germany}
\email{maike.gruchot@rwth-aachen.de}
\author{David I. Stewart*}
\address{School of Mathematics, Statistics and Physics, Newcastle University, UK}
\email{david.stewart@newcastle.ac.uk}
\subjclass[2010]{20G15}
\begin{document}
\begin{abstract}
Let $k'/k$ be a finite purely inseparable field extension and let $G'$ be a reductive $k'$-group.
We denote by $G=\R_{k'/k}(G')$ the Weil restriction of $G'$ across $k'/k$, a pseudo-reductive group. 
This article gives bounds for the exponent of the geometric unipotent radical $\RR_{u}(G_{\bar{k}})$ in terms of invariants of the extension $k'/k$, starting with the case $G'=\GL_n$ and applying these results to the case where $G'$ is a simple group.
\end{abstract}
\maketitle
\vspace{-50pt}\section{Introduction}
Let $k$ be a field, which we assume is imperfect of characteristic $p$. In this case there exist pseudo-reductive $k$-groups which are not reductive. Recall that a smooth, connected, affine algebraic $k$-group $G$ is \emph{pseudo-reductive} if the largest smooth connected unipotent normal $k$-subgroup $\RR_{u,k}(G)$ is trivial. \blfootnote{* Funded by Leverhulme Trust Research Project Grant number RPG-2021-080}

We are interested in the structure of the geometric unipotent radical $R:=\RR_u(G_{\bar k})=\RR_{u,\bar k}(G_{\bar k})$ of $G$. Since $R$ is a $p$-group, it makes sense to study its \textit{exponent}: the minimal integer $s$ such that the $p^s$-power map on the geometric unipotent radical factors through the trivial group. We denote the \emph{exponent} of $R$ by $e(R)$.

M.~Bate, B.~Martin, G.~R\"ohrle and the third author previously gave some bounds for $e(R)$ in \cite{BMRS19}. For example, \cite[Lem.~4.1]{BMRS19} implies that if $k'/k$ is a simple purely inseparable field extension with $(k')^{p^e}\subseteq k$, then $e(R)\leq e$.

The  monographs \cite{CGP15} and \cite{CPclassification16} contain a classification of pseudo-reductive groups. They are all related in some way or other to the Weil restrictions of reductive groups, which are themselves pseudo-reductive. We focus on groups which are Weil restrictions, since general pseudo-reductive groups contain central pseudo-reductive subgroups whose classification is thought to be intractable. Thus we assume unless stated otherwise that $G=\R_{k'/k}(G')$ for some reductive $k'$-group $G'$ where $k'$ is a finite non-zero reduced $k$-algebra. Since we are interested in $e(R)$, by the remarks in \cite[\S4]{BMRS19} we may as well consider the case that $k$ is separably closed and $k'$ is a purely inseparable field extension of $k$. In this case $G'$ is a split reductive group and as such is classified by its root datum. 

Recall that the exponent of the extension $k'/k$ is the smallest integer $e$ such that $(k')^{p^e}\subseteq k$. To describe our results, we need more sensitive data about the structure of $k'/k$. Since $k'/k$ is purely inseparable, we can appeal to the results in \cite{Pickert50} and \cite{Rasala71}, which guarantee a \emph{normal generating sequence} $\alpha_1,\ldots,\alpha_l$ of $k'$ over $k$ with certain properties---see Definition~\ref{dfn:ngs} below for more details. 
In particular, these elements come with certain exponents which are invariants of $k'/k$. These are the integers $e_1\geq \ldots\geq e_l$ such that $e_i$ is minimal subject to 
\[\alpha_i^{p^{e_i}}\in k(\alpha_1^{p^{e_i}},\ldots,\alpha_{i-1}^{p^{e_i}}).\]
It is immediate that $e_1=e$ and it follows from the Tower Law that $[k':k]=\prod p^{e_i}$. 
Let $r\in\mathbb{N}$. We define integers 
\begin{align*}\m(k'/k)&:=\sum_{i=1}^l(p^{e_i}-1)+1=\sum_{i=1}^{l}p^{e_i}-l+1,\\
\Em(k'/k)&:=\lceil \log_p(\m(k'/k))\rceil,\\
\mr(k'/k,r)&:=\sum_{i=1}^r p^{e_i}\text{,\quad where we take $e_i=0$ for $i>l$},\\
\emr(k'/k,r)&:=\lceil \log_p(\mr(k'/k,r))\rceil,\\
\text{and finally } \E(k'/k,r)&:=\min\{\Em(k'/k),\emr(k'/k,r)\}. \end{align*}
In other words $\Em(k'/k)$ is the minimal $s$ such that $p^s\geq \m(k'/k)$, and similarly for $\emr$.

In answering a question of \cite{BMRS19} on the exponent of the intersection of $R$ with a pseudo-Borel subgroup, we discovered that these numbers give lower bounds on $e(R)$ and for large enough rank determine the exponent of $R$ exactly. We first state our results in the case that $G'$ is a general linear group.
 
\begin{thm}
	\label{thm:1}
	Let $k'/k$ be a finite purely inseparable field extension of a separably closed field $k$ of characteristic $p$. Suppose $k'/k$ has exponent $e$ and a normal generating sequence $\alpha_1,\ldots,\alpha_l$ with sequence of exponents $e_1,\ldots,e_l$. For $r\in\mathbb{N}$, let $G':=\GL_r$, $G:=\R_{k'/k}(G')$ and $R:=\RR_u(G_{\bar k})$. Furthermore let $B'$ denote a Borel subgroup of $G'$ and $B:=\R_{k'/k}(B')\subseteq G$ the corresponding pseudo-Borel subgroup of $G$. The following hold:
	\begin{enumerate}
	\item if $s$ satisfies $p^s\geq p^e r$ then $e(R)$ is at most $s$; i.e.~$e(R)\leq \lceil e+\log_p(r)\rceil$;
	\item we have $e(R\cap B_{\bar k})=\E(k'/k,r)$;\label{thmpart:borel}
	\item in particular, $e(R)\geq \E(k'/k,r)$\label{thmpart:emr};
	\item if $\sum_{i=r+1}^l(p^{e_i}-1)<r-1$ then $e(R)=\E(k'/k,r)=\Em(k'/k)$\label{thmpart:exact}.
	\end{enumerate}
\end{thm}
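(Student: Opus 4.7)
Throughout I work with $A := k' \otimes_k \bar k$, a local Artinian $\bar k$-algebra with residue field $\bar k$ and maximal ideal $\mathfrak{m}$. Choosing $p^{e_i}$-th roots in $\bar k$ of $\alpha_i^{p^{e_i}}$ gives generators $\beta_1,\ldots,\beta_l$ of $\mathfrak{m}$ with $\beta_i^{p^{e_i}} = 0$, from which two facts follow: the nilpotence index of $\mathfrak{m}$ is exactly $\m(k'/k)$, and since Frobenius on $A$ is additive and kills each generator, $a^{p^e} = 0$ for every $a \in \mathfrak{m}$. I identify $R(\bar k)$ with $\{I + x : x \in \Mat_r(\mathfrak{m})\}$ and $R \cap B_{\bar k}$ with the subset where $x$ is also upper triangular. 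The running tool is that the subring $\bar k[x] \subseteq \Mat_r(A)$ is commutative and of characteristic $p$, so $(I + x)^{p^s} = I + x^{p^s}$, reducing exponent estimates to nilpotence estimates for $x$.

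For Part (i), Cayley--Hamilton in $A[x]$ gives $x^r = \sum_{i=1}^r (-1)^{i-1} c_i x^{r-i}$ with $c_i \in \mathfrak{m}^i$. Taking the $p^e$-th power in the commutative ring $A[x]$ and using Frobenius additivity together with $c_i^{p^e} = 0$ yields $x^{rp^e} = 0$. The same identity read more crudely as $x^{p^s} \in \Mat_r(\mathfrak{m}^{p^s})$ gives the blanket bound $e(R) \leq \Em(k'/k)$ valid for any $x \in \Mat_r(\mathfrak{m})$, reserved for Part (iv).

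For Part (ii), I would expand, for upper triangular $X$ with $d_i := X_{ii} \in \mathfrak{m}$,
\[
(X^n)_{ij} = \sum_{i = u_0 < \cdots < u_t = j} \Bigl(\prod_{q=0}^{t-1} X_{u_q, u_{q+1}}\Bigr)\, h_{n-t}(d_{u_0}, \ldots, d_{u_t}),
\]
where $h_m$ is the complete homogeneous symmetric polynomial. The $\Em$ half of the upper bound is the blanket bound. For the $\emr$ half, iterate $X \mapsto X^p$ and track which superdiagonals die: the main diagonal of $X^{p^s}$ vanishes once $s \geq e$; the first superdiagonal by $h_{p^s-1}(x,y) = (x-y)^{p^s-1}$ together with $(d_i - d_{i+1})^{p^e} = 0$; and for larger $t$ by the Jacobi--Trudi identity $V(d_{u_0},\ldots,d_{u_t}) \cdot h_{p^s - t}(d_{u_0},\ldots,d_{u_t}) = \sum_j (-1)^j d_{u_j}^{p^s}\, V(\hat d_{u_j})$ combined with Frobenius vanishing, aggregated to force $X^{p^s} = 0$ once $p^s \geq \mr(k'/k,r)$. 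For the matching lower bound, construct $X$ explicitly by placing $\beta_1, \ldots, \beta_{\min(r, l)}$ on the diagonal and choosing superdiagonal entries so that a specific entry of $X^{p^{\E - 1}}$ picks up a nonzero monomial $\beta_1^{a_1} \cdots \beta_l^{a_l}$ of degree $p^{\E - 1}$ in the $\bar k$-basis $\{\beta_1^{a_1} \cdots \beta_l^{a_l} : 0 \leq a_i < p^{e_i}\}$ of $A$.

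Part (iii) is immediate from (ii) via $R \cap B_{\bar k} \leq R$. For (iv), the arithmetic identity $\m - \mr = \sum_{i=r+1}^l (p^{e_i} - 1) - (r - 1)$ (empty sum for $r > l$) shows the hypothesis $\sum_{i=r+1}^l (p^{e_i} - 1) < r-1$ forces $\m(k'/k) \leq \mr(k'/k, r)$, so $\E(k'/k, r) = \Em(k'/k)$; combining (iii) with the blanket upper bound $e(R) \leq \Em(k'/k)$ pinches to equality. The principal obstacle is the $\emr$ half of the (ii) upper bound: the Jacobi--Trudi identity only yields $V \cdot h = 0$ rather than $h = 0$ directly because the Vandermonde factor $V$ is itself nilpotent in $A$, so one must carefully combine this with the path-sum off-diagonal product $\prod X_{u_q, u_{q+1}}$ and the pointwise Frobenius structure to recover the arithmetic $\mr(k'/k, r)$ in place of the cruder $rp^e$.
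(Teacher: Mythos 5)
Your overall strategy is sound and does roughly parallel the paper's (pass to a suitable tensor-product algebra, reduce to nilpotence of a single matrix via $(I+x)^{p^s}=I+x^{p^s}$, use Cayley--Hamilton for (i), and analyse upper-triangular path sums for (ii)), but there are two substantial gaps, one of which you flag yourself.

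First, the opening claim on $A:=k'\otimes_k\bar k$ is not justified by what you write. Setting $\beta_i=\alpha_i\otimes 1-1\otimes\gamma_i$ with $\gamma_i\in\bar k$ a $p^{e_i}$-th root of $\alpha_i^{p^{e_i}}$ gives $\beta_i^{p^{e_i}}=\alpha_i^{p^{e_i}}\otimes 1-1\otimes\alpha_i^{p^{e_i}}$, and this is zero in $k'\otimes_k\bar k$ only when $\alpha_i^{p^{e_i}}\in k$. By Definition~\ref{dfn:ngs}, $\alpha_i^{p^{e_i}}$ merely lies in $k_{i-1}=k[\alpha_1,\dots,\alpha_{i-1}]$ (and in $k$ precisely when the extension is \emph{modular}), so your naive $\beta_i$ will generally satisfy $\beta_i^{p^{e_i}}\neq 0$. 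The statement that $A$ is a truncated polynomial ring $\bar k[\beta_1,\dots,\beta_l]/(\beta_i^{p^{e_i}})$ is in fact true, but proving it requires an inductive change of variables: use the relation (coming from Rasala's theorem, as in~\eqref{eq:peipower} and Lemma~\ref{lem:mstuff}(ii)) that $a_j^{p^{e_j}}\in\bar k[\beta_1^{p^{e_j}},\dots,\beta_{j-1}^{p^{e_j}}]$, take $p^{e_j}$-th roots of the \emph{coefficients} (available since $\bar k$ is perfect), and set $\beta_j=a_j-\text{correction}$. This is precisely the work the paper's Lemma~\ref{lem:mstuff} does over $k'$; pushing it to $\bar k$ is a nice simplification but is not free.

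Second, and more seriously, the $\emr$ half of the upper bound in (ii) is not a ``detail to combine carefully'' --- it is the technical heart of the theorem, and your proposed mechanism does not close. The Jacobi--Trudi/bialternant identity gives $V\cdot h_{p^s-t}=\sum_j\pm d_{u_j}^{p^s}V(\hat d_{u_j})=0$ once $s\geq e$, but in a non-reduced ring like $A$ the Vandermonde $V$ is a nilpotent zero-divisor, so this tells you nothing about $h_{p^s-t}$ itself. What is actually needed is that every monomial $d_{u_0}^{a_0}\cdots d_{u_t}^{a_t}$ appearing in $h_{p^s-t}(d_{u_0},\dots,d_{u_t})$ vanishes once $\sum a_i\geq\mr(k'/k,t+1)-t$; this is exactly the content of the paper's Lemma~\ref{lem:mbound}, proved by a fairly delicate induction on $t$ using Lemma~\ref{lem:mstuff}(ii) and a division argument on the $a_i$. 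Your writeup has no analogue of this lemma and no path to it from the Jacobi--Trudi identity. (Note also that once you have Lemma~\ref{lem:mbound} the off-diagonal product $\prod X_{u_q,u_{q+1}}$ is not even needed to kill each term --- each $h$-factor already dies --- so the gesture at ``combining with the path-sum off-diagonal product'' points in the wrong direction.) Relatedly, your lower bound construction is left at the level of ``choosing superdiagonal entries so that a specific entry picks up a nonzero monomial''; the delicate point in the paper is exhibiting an explicit matrix for which a \emph{unique} non-decreasing path of length $p^{\E-1}$ contributes to a fixed entry so that no cancellation can occur, and this is not addressed.
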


\begin{rems}(i) The bounds in Theorem \ref{thm:1} are both arithmetic and Lie-theoretic in character: the integers $\m(k'/k)$ and $\Em(k'/k)$ depend only on the arithmetic structure of $k'/k$ but $\mr(k'/k,r)$, $\emr(k'/k,r)$ and $\E(k'/k,r)$ depend on both $k'/k$ and the rank $r$ of $G'$.

(ii). Part (i) of Theorem \ref{thm:1} improves \cite[Lem.~4.4]{BMRS19}, which required $p^s\geq r^2(p^e-1)$. As a corollary of (i) and (iv), if $G=\R_{k'/k}(\GL_r)$ for $p\geq r\geq 2$ then the exponent of $\RR_u(G_{\bar{k}})$ is $e$ if $k'/k$ is a simple extension and is $e+1$ otherwise. If $r=1$, then $e(R)=e$ as $\R_{k'/k}(\GL_1)$ is sent into the canonical $\GL_1$ $k$-subgroup by the $p^e$-power map, since it also sends $k'$ into $k$.

(iii). In \cite[Rem.~4.5(iv)]{BMRS19}, it was asked if the exponent $e(R)$ always coincides with the exponent $e(R\cap B_{\bar k})$. 
Clearly Theorem \ref{thm:1}(ii)-(iv) answers this question in the affirmative when $G'=\GL_r$ of large enough rank. 
We give an example to show that in general the answer is `no' (see Proposition \ref{prop:sl2}), coming from $\SL_2$ in characteristic $2$. In fact, in  Section~\ref{sec:exm} we give a complete description of $e(R)$ when $G'$ is $\SL_2$, $\PGL_2$ or $\GL_2$.
\end{rems}

Together with the rank $1$ results in Section~\ref{sec:exm}, the following (proved in Section~\ref{sec:rk2}) confirms the large rank behaviour in all simple groups, with mild conditions on the characteristic.

\begin{cor}\label{rankbig2} Let $k'/k$ be a finite, purely inseparable field extension of a separably closed field $k$ of characteristic $p$. Suppose $k'/k$ has exponent $e$ and a normal generating sequence $\alpha_1,\ldots,\alpha_l$ with sequence of exponents $e_1,\ldots,e_l$. Let $G'$ be a (split) simple algebraic $k$-group of rank $r\geq 2$, $G:=R_{k'/k}(G')$ and let $R:=\RR_{u}(G_{\bar k})$. If $G'$ is type $B,C,D,F$ then assume $p\neq 2$ or $G'$ is $\SO_{n}$ or $\Sp_{2n}$, and if $G$ is of type $E_6$ then assume $p\neq 3$. Then the exponent $e(R)\geq \E(k'/k,r)$; if $\sum_{i=r+1}^l(p^{e_i}-1)<r-1$ then in fact, $e(R)=\E(k'/k,r)=\Em(k'/k)$.\end{cor}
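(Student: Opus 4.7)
The plan is to sandwich $G'$ between two general linear groups and invoke Theorem~\ref{thm:1}. Concretely, I want a closed immersion $\iota\colon\GL_r\hookrightarrow G'$ to produce the lower bound, and a faithful closed immersion $\pi\colon G'\hookrightarrow\GL_M$ for some $M\geq r$ to produce the upper bound. Weil restriction is functorial and preserves closed immersions, and for any reductive closed $k'$-subgroup $H'\subseteq G'$ the geometric unipotent radical $\RR_u(\R_{k'/k}(H')_{\bar k})$ coincides with the kernel of the canonical surjection $\R_{k'/k}(H')_{\bar k}\to H'_{\bar k}$. These kernels fit into a natural commutative square with the analogous kernels for $G'$, so both kinds of inclusions pass to inclusions of geometric unipotent radicals.

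For the lower bound, I would realise $\iota$ type-by-type. In types $A,B,C,D$ (using the restriction to $\SO$ or $\Sp$ when $p=2$) and in $E_6$ (for $p\neq 3$), $E_7$, $E_8$ and $G_2$, one takes the Levi factor of the maximal parabolic obtained by deleting an appropriate end node of the Dynkin diagram: the resulting $A_{r-1}$ subsystem subgroup, together with the one-dimensional centre of the parabolic, is isomorphic to $\GL_r$ precisely because the excluded characteristics are those in which a very special isogeny would force the Levi into an isogenous form. For $F_4$ with $p\neq 2$, I would instead go via the Borel--de~Siebenthal subsystem subgroup of type $C_4=\Sp_8$ and take its $\GL_4$ Levi. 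Weil-restricting $\iota$ gives $H:=\R_{k'/k}(\GL_r)\hookrightarrow G$; by the kernel comparison above, $R_0:=\RR_u(H_{\bar k})\subseteq\RR_u(G_{\bar k})=R$. Theorem~\ref{thm:1}(iii) then yields $e(R)\geq e(R_0)\geq\E(k'/k,r)$.

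For the upper bound, assume the tail hypothesis $\sum_{i=r+1}^l(p^{e_i}-1)<r-1$. Pick any faithful closed immersion $\pi\colon G'\hookrightarrow\GL_M$ (the standard or minimal representation will do in every case, and necessarily $M\geq r$). Weil restriction gives $G\hookrightarrow\tilde G:=\R_{k'/k}(\GL_M)$, and the kernel argument yields $R\hookrightarrow\tilde R:=\RR_u(\tilde G_{\bar k})$. Since the summation range of the tail hypothesis only shrinks as one passes from rank $r$ to rank $M\geq r$, one still has $\sum_{i=M+1}^l(p^{e_i}-1)<M-1$, so Theorem~\ref{thm:1}(iv) applies to $\tilde G$ and gives $e(\tilde R)=\Em(k'/k)$. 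Combined with the identity $\E(k'/k,r)=\Em(k'/k)$ that is also forced by the tail hypothesis, one concludes $e(R)=\Em(k'/k)=\E(k'/k,r)$.

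The principal obstacle is the type-by-type verification that a genuine $\GL_r$---rather than a merely isogenous form---sits inside each permitted $G'$: the special isogenies that occur for $B,C,D,F$ in characteristic $2$ and for $E_6$ in characteristic $3$ are precisely what would obstruct identifying the natural $A_{r-1}$ Levi with $\GL_r$, and thus what necessitates the stated hypotheses on $p$. The upper-bound step is essentially formal once a faithful representation of $G'$ is in hand.
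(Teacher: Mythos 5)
Your upper-bound argument is correct and essentially matches the paper's: embed $G'\hookrightarrow\GL_M$ over $k'$ with $M\geq r$, Weil-restrict, invoke Proposition~\ref{prop:kernelqG}(iii), and apply Theorem~\ref{thm:1}(iv) noting that the tail condition for rank $M$ follows from that for rank $r$.

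The lower-bound argument has a genuine gap. You assert that in every permitted case a genuine $\GL_r$ sits inside $G'$ as a Levi, and that the characteristic restrictions are there to rule out very special isogenies of the Levi; both claims are false. Type $A$ carries no characteristic restriction in the corollary, yet for $G'=\PGL_{r+1}$ the image of the standard $\GL_r\subset\SL_{r+1}$ is $\GL_r/\mu_{r+1}$, not $\GL_r$ --- the obstruction is the isogeny type of $G'$, not the characteristic, and very special isogenies play no role (there are none for type $A$ or $E_6$). The same issue recurs for $\Spin$, $\PSp$, $\PSO$, $\HSpin$, etc. The paper resolves this by introducing the notion of a \emph{skewed} $\GL_n$ (a quotient $\GL_n/\mu_s$ with $s\nmid n$) and proving in Lemma~\ref{usefulsubgp} that for such $L$ one has $L\times\Gm\cong\GL_n\times\Gm$, whence $e(\RR_u(\R_{k'/k}(L)_{\bar k}))=e(\RR_u(\R_{k'/k}(\GL_n)_{\bar k}))$; and in the remaining cases (classical non-$\SO$/$\Sp$ with $p\neq 2$, $F_4$ with $p\neq 2$, $E_6$ with $p\neq 3$) it connects $L$ to $\GL_r$ through an \'etale central isogeny, invoking Proposition~\ref{prop:kernelqG}(iv). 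The characteristic hypotheses in the corollary are precisely the condition that the relevant central $\mu_2$ or $\mu_3$ isogenies be \'etale. Without the skewed-$\GL_r$ device (or the \'etale-isogeny route), your ``Levi $=\GL_r$'' premise fails and the lower bound does not follow. You also place $A_6,A_7$-type Levi subgroups inside $E_7,E_8$; the paper instead uses the Borel--de~Siebenthal subsystem subgroups of type $A_7\subset E_7$ and $A_8\subset E_8$, which are not Levis of parabolics, and then extracts a skewed $\GL_7$ resp.\ $\GL_8$ from these simple subgroups --- your plan would at least need the same isogeny-type analysis there.
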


\section{Preliminaries and Notation}

Let $k$ be a field of characteristic $p$ and $G$ be a $k$-group, by which we mean an affine algebraic group scheme of finite type over $k$.
The \emph{$k$-unipotent radical} is the maximal smooth connected normal unipotent $k$-subgroup of $G$ and we denote this by $\RR_{u,k}(G)$.
The \emph{geometric unipotent radical} is the $\bar{k}$-unipotent radical of the base change $G_{\bar{k}}$ of $G$ to $\bar k$. This is denoted by $\RR_{u}(G_{\bar{k}})$.
We call a smooth connected $k$-group $G$ \emph{reductive} if $\RR_{u}(G_{\bar{k}})=1$ and \emph{pseudo-reductive} if $\RR_{u,k}(G)=1$. From now on $G$ will always be pseudo-reductive. We recall the definition and key properties of Weil restriction from \cite[A.5]{CGP15}.
\begin{defn}
Let $B\to B'$ be a finite flat map of noetherian rings, and $X'$ a quasi-projective $B'$-scheme. The \emph{Weil restriction} is a finite type $B$-scheme satisfying the universal property $$\R_{B'/B}(X')(A)=X'(B'\otimes_B A)$$ for all $B$-algebras $A$.
\end{defn}

The following proposition makes use of the natural map 
\begin{align}\label{eq:map-q}
q_{G'}:\R_{k'/k}(G')_{k'}\to G',
\end{align}
which is induced by $k'\otimes_k A \to A; a\otimes b \mapsto ab$ for any $k'$-algebra $A$. 

\begin{prop}[{\cite[Prop.~A.5.11, Thm.~1.6.2]{CGP15}, \cite[\S2]{BMRS19}, \cite[Lem.~3.4]{BMRS19}}]\label{prop:kernelqG} Let $k'/k$ be a  finite and purely inseparable field extension, $G'$ a non-trivial smooth connected $k'$-group and $G=\R_{k'/k}(G')$.
	\begin{enumerate}
		\item The kernel of $q_{G'}$ is a smooth connected unipotent $k'$-subgroup and thus it is contained in $\RR_{u,k'}(G_{k'})$.
		\item If $G'$ is reductive over $k'$, then the kernel of $q_{G'}$ has field of definition over k equal to $k'\subseteq \bar{k}$. Thus the kernel $\ker(q_{G'})$ coincides with $\RR_{u,k'}(G_{k'})$, which is a $k'$-descent of $\RR_u(G_{\bar{k}})$ (i.e.~$\RR_{u,k'}(G_{k'})_{\bar k}\cong \RR_u(G_{\bar{k}})$).
		\item If $H'$ is a reductive subgroup of $G'$ then the geometric unipotent radical of $\R_{k'/k}(H')$ is a subgroup of the geometric unipotent radical of $\R_{k'/k}(G')$.
		\item If $f:G_1\to G_2$ is an \'etale isogeny of reductive $k'$-groups then $\R_{k'/k}(f)_{\bar k}$ induces an isomorphism of geometric unipotent radicals of the respective Weil restrictions.
	\end{enumerate}	
\end{prop}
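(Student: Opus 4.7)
The plan is to deduce Corollary~\ref{rankbig2} from Theorem~\ref{thm:1} by sandwiching $G'$ between reductive $k'$-groups to which Proposition~\ref{prop:kernelqG} applies.

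For the lower bound $e(R)\ge \E(k'/k,r)$, the strategy is to produce, inside $G'$, a reductive $k'$-subgroup $H'$ that is either isomorphic to $\GL_r$ or étale-isogenous to a reductive group containing $\GL_r$. Once such an $H'$ is in hand, Proposition~\ref{prop:kernelqG}(iii) and (iv) together yield an inclusion $\RR_u(\R_{k'/k}(\GL_r)_{\bar k})\subseteq R$, and Theorem~\ref{thm:1}(iii) then supplies the bound. I would run the case analysis by Dynkin type. In type $A_r$ the block embedding $g\mapsto \diag(g,(\det g)^{-1})$ gives $\GL_r\hookrightarrow \SL_{r+1}$, and the same subgroup lifts to any isogeny form. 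In types $B_r,C_r,D_r$ one takes the Siegel-style Levi of $\Sp_{2r}$, or the Levi of the parabolic stabilizing a maximal isotropic subspace in $\SO_{2r+1}$ or $\SO_{2r}$, each of which is $\GL_r$; the hypothesis $p\ne 2$ (or restricting to $\SO_n$/$\Sp_{2n}$) is exactly what makes the central isogenies between the isogeny forms étale, so that Proposition~\ref{prop:kernelqG}(iv) applies. In types $E_6, E_7, E_8$ one uses the Levi of type $A_{r-1}$ (that is, $A_5, A_6, A_7$ respectively), which is $\GL_r$ up to an étale central isogeny guaranteed by the hypothesis $p\ne 3$ in the $E_6$ case. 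In $F_4$ one takes the subsystem subgroup of type $B_4$ arising from the extended Dynkin diagram, whose Siegel Levi is $\GL_4$. In $G_2$ either maximal Levi is already $\GL_2$.

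For the upper bound, assume the tail condition $\sum_{i=r+1}^l(p^{e_i}-1)<r-1$. I would embed $G'$ as a closed $k'$-subgroup of some $\GL_N$ via a faithful representation, so that $N\ge r$. Since Weil restriction preserves closed immersions, $\R_{k'/k}(G')\hookrightarrow \R_{k'/k}(\GL_N)$, and Proposition~\ref{prop:kernelqG}(iii) gives $R\subseteq \RR_u(\R_{k'/k}(\GL_N)_{\bar k})$. Because $N\ge r$, the tail condition persists with $r$ replaced by $N$ (the sum on the left only decreases, the threshold on the right only grows), so Theorem~\ref{thm:1}(iv) identifies the exponent of the ambient geometric unipotent radical with $\Em(k'/k)$. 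Hence $e(R)\le \Em(k'/k)$, which combined with the lower bound yields $e(R)=\E(k'/k,r)=\Em(k'/k)$.

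The main obstacle is the uniform case-by-case production of the $\GL_r$-subgroup and verifying that the relevant central isogenies are étale under the stated characteristic hypotheses. The $F_4$ case is the subtlest, since the desired rank-$4$ subgroup must be realized from a subsystem subgroup rather than a Levi; in the exceptional types more generally, one must also carefully match each Levi's isogeny form with the correct $\GL_r$ in order to invoke Proposition~\ref{prop:kernelqG}(iv).
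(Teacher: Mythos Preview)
Your proposal does not address the stated Proposition at all. Proposition~\ref{prop:kernelqG} is a collection of structural facts about $\ker(q_{G'})$ and the geometric unipotent radical; in the paper it carries no proof, being cited directly from \cite{CGP15} and \cite{BMRS19}. What you have written is instead a proof sketch of Corollary~\ref{rankbig2}, which \emph{uses} Proposition~\ref{prop:kernelqG} as a black box. So as a proof of the statement you were asked about, the proposal is simply off-target.

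If one reads your proposal as an attempt at Corollary~\ref{rankbig2}, then it follows essentially the same two-step strategy as the paper: find a rank-$r$ subgroup $L$ of $G'$ whose Weil restriction has the same radical exponent as $\R_{k'/k}(\GL_r)$ (the content of Lemma~\ref{usefulsubgp}), then sandwich using Proposition~\ref{prop:kernelqG}(iii) and a faithful representation for the upper bound. The paper packages the case analysis into Lemma~\ref{usefulsubgp} and introduces the notion of a ``skewed $\GL_n$'' to handle the isogeny forms uniformly; your sketch does the same case split more loosely. A couple of details differ: for $E_7$ and $E_8$ the paper uses the maximal-rank $A_7$ and $A_8$ subsystem subgroups (via Borel--de~Siebenthal) rather than $A_{r-1}$ Levis, and for $F_4$ it works with the $A_3$ Levi of the $\Spin_9$ subgroup, noting explicitly that the needed isogeny to $\GL_4$ is \'etale only when $p\neq 2$. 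Your $F_4$ remark glosses over this: the $B_4$ subsystem subgroup is $\Spin_9$, not $\SO_9$, so its Siegel-type Levi is not literally $\GL_4$, and you need the \'etale isogeny (hence $p\neq 2$) to conclude.
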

We are interested in the following invariant of a unipotent $k$-group:
\begin{defn}
	Let $k$ be a field of characteristic $p$ and $U$ a unipotent $k$-group.
	The exponent $\exp(U)$ of $U$ is the minimal $s$ such that the $p^s$-power map on $U$ factors through the trivial group. 
\end{defn}
Clearly, this definition is insensitive to base change. As in \cite[\S4]{BMRS19} we may therefore assume $k$ is separably closed for the remainder of the article.

When calculating with matrices, we make use of the fact that if $U(k)\subset U$ is dense (e.g.~if $k$ is perfect \cite[Thm.~34.4]{Humphreys75}), then a map factors through the trivial group if and only if it maps to the identity on $U(k)$. 
Since $R=\RR_u(G_{\bar k})$ descends to $R':=\RR_{u,k'}(G_{k'})$ whose $k'$-points are dense (by the fact that $k'$ is separably closed) we have $e(R)$ is the smallest $s$ such that $p^s$ kills $R'(k')$.

Following \cite{Oes84}, let $\B:=k'\otimes_k k'$. Then $\B$ is a local ring with maximal ideal  $$\mm=\langle 1\otimes x - x \otimes 1 \mid x\in k' \rangle=\ker(\phi),$$  where $\phi:k'\otimes_k k' \to k'; a\otimes b \mapsto ab$. Moreover, $\R_{k'/k}(G')_{k'}$ can be identified with $\R_{\B/k'}(G'_\B)$ and the $k'$-points of the $k'$-unipotent radical of the former is the kernel $G'(\mm)$ of the map $G'(\B)\to G'(k)$ induced by $\phi$. Suppose now $G'=\GL_r$ and $G=\R_{k'/k}(G')$. As the entries of any $M\in \Mat(r,\mm)$ are nilpotent elements of $\B$, it follows that $I_r+M$ has invertible determinant, hence is an element of $G'(\mm)$. Therefore \[\RR_{u,k}(\R_{k'/k}(\GL_r)_{k'})(k')=\{I_r+M\mid M\in \Mat(r,\mm)\}.\] 
As $I_r$ and $M$ commute, we have $A^{p^s}=I_r+M^{p^s}=I_r$ is equivalent to $M^{p^s}=0$. Hence:
\begin{lem}\label{lem:expglr} 
	The exponent of the geometric unipotent radical of $\R_{k'/k}(\GL_r)$ is the smallest $s$ such that $M^{p^s}=0$ for all $M\in \Mat(r,\mm)$.
\end{lem}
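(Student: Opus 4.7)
The plan is simply to assemble the three facts already assembled in the discussion immediately preceding the statement.

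First, I note that $e(R)$ coincides with the minimal $s$ such that the $p^s$-power map kills the abstract group $R'(k')$, where $R' := \RR_{u,k'}(G_{k'})$. This is because $R'$ is a $k'$-descent of $R$ by Proposition \ref{prop:kernelqG}(ii), and $R'(k')$ is Zariski dense in $R'$ as $k'$ is separably closed, so the $p^s$-power morphism on $R'$ factors through the trivial group iff it is trivial on $R'(k')$.

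Next, I would invoke the explicit description $R'(k') = \{I_r + M : M \in \Mat(r, \mm)\}$ obtained earlier from Oesterl\'e's ring $\B = k' \otimes_k k'$ and its maximal ideal $\mm = \ker(\phi)$. The key point to reiterate is that any $M \in \Mat(r, \mm)$ has entries which are nilpotent in the local ring $\B$, so $\det(I_r + M)$ is a $1$-unit in $\B$, whence $I_r + M \in \GL_r(\B)$; and conversely the elements of $\GL_r(\B)$ reducing to $I_r$ under $\phi$ are exactly those of this shape. These are the $k'$-points of the kernel of $q_{G'}$, which is $R'$.

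Finally, I would compute $(I_r + M)^{p^s}$ for $M \in \Mat(r,\mm)$ via the binomial theorem. Since $I_r$ commutes with $M$ in $\Mat(r, \B)$ and $\B$ has characteristic $p$, all intermediate binomial coefficients $\binom{p^s}{k}$ for $1 \le k \le p^s-1$ vanish in $\B$, giving $(I_r+M)^{p^s} = I_r + M^{p^s}$. Hence $(I_r+M)^{p^s} = I_r$ iff $M^{p^s}=0$, so that the minimal $s$ killing $R'(k')$ is the minimal $s$ with $M^{p^s}=0$ for every $M \in \Mat(r,\mm)$. There is no real obstacle; the lemma is a direct packaging of the preceding calculation.
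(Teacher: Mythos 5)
Your proof is correct and takes essentially the same route as the paper, which in fact packages exactly this argument into the paragraph preceding the lemma: density of $R'(k')$ (so exponent is detected on $k'$-points), the identification $R'(k') = \{I_r + M : M \in \Mat(r,\mm)\}$ via the local ring $\B$, and the freshman's dream $(I_r+M)^{p^s} = I_r + M^{p^s}$ in characteristic $p$.
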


It was observed in \cite[Lem.~4.1]{BMRS19} that as $\mm^n=0$ for some $n\in \mathbb{N}$, one knows that $s$ in the lemma above is bounded above by the minimal $t$ such that $p^t\geq n$: the entries of $M^{p^t}$ are homogeneous polynomials of degree $p^t$ in elements of $\mm$ and so must vanish.

\begin{defn}\label{dfn:ngs}
	Let $k'/k''/k$ be a tower of finite purely inseparable field extensions.
	\begin{enumerate}
		\item If $x\in k'$ then the minimal $s\in \mathbb{N}$ such $x^{p^s}\in k''$ is called \emph{the exponent of $x$ with respect to $k''$} and denoted by $e_{k''}(x)$.
		\item We say that $x\in k'$ is \emph{normal in $k'/k''$} if $e_{k''}(x)\ge e_{k''}(y)$ for all $y\in k'$. 
		\item A sequence $\alpha_1,\ldots, \alpha_l$ in $k'$ is called a \emph{normal sequence} if for every $1\le i \le l$ and $k_i=k[\alpha_1,\ldots, \alpha_i]$ we have that $\alpha_i$ is normal in $k'/k_{i-1}$, and $\alpha_i \notin k_{i-1}$.
			\item A normal sequence $\alpha_1,\ldots, \alpha_l$ is called a \emph{normal generating sequence of $k'/k$} if $k[\alpha_1,\ldots, \alpha_l]=k'$.
		\item For a normal generating sequence $\alpha_1,\ldots, \alpha_l$ of $k'/k$ we set $e_i=e_{k_{i-1}}(\alpha_i)$. We call $e_1,\ldots,e_l$ a \emph{sequence of exponents for $k'/k$}
	\end{enumerate}
\end{defn}
Note that if $e_1,\ldots,e_l$ is a sequence of exponents for $k'/k$ then we have $e_1\ge e_2 \ge \ldots  \ge e_l$ due to \cite[Prop. 5]{Rasala71}. Crucially, by \textit{loc.~cit.} we also have \begin{equation}\label{eq:NGS} \alpha_i^{p^{e_i}}\in k(\alpha_{1}^{p^{e_i}},\ldots,\alpha_{i-1}^{p^{e_i}}) \text{ for all }1\le i\le l.\end{equation}
In particular, as the $e_i$ are decreasing,
\begin{equation}\label{eq:peipower} (k')^{p^{e_i}}\subseteq k(\alpha_1^{p^{e_i}},\ldots,\alpha_{i-1}^{p^{e_i}}).\end{equation}

Combining this with the main result of \cite{Pickert50}, we get that the sequence of exponents is an invariant of the field extension $k'/k$. More precisely:
\begin{lem}\label{lem:NGS}
	Let $k'/k$ be a finite purely inseparable field extension.
	Suppose that $\alpha_{1},\ldots,\alpha_l$ and $\beta_1,\ldots,\beta_{l'}$ are normal generating sequences of $k'/k$ with sequences of exponents $e_1,\ldots, e_l$ and $e'_1,\ldots, e'_{l'}$.
	Then $l=l'$ and $e_i=e'_i$ for  $1\leq i< l$.
\end{lem}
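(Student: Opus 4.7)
My plan is to recover the sequence of exponents from an intrinsically defined filtration of $k'$ whose graded pieces have dimensions depending only on $k'/k$. For $j \ge 0$ I would set $L_j := k \cdot (k')^{p^j}$, a $k$-subfield of $k'$ which manifestly depends only on the extension. These form a descending chain $k' = L_0 \supseteq L_1 \supseteq \cdots$ stabilising at $k$ as soon as $j \ge e := e_1$.

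The main technical step is the identity
\[ L_j \;=\; k\bigl(\alpha_i^{p^j} : e_i > j\bigr) \]
for any normal generating sequence $\alpha_1,\ldots,\alpha_l$ with exponents $e_1 \ge \cdots \ge e_l$. Since the $p^j$-power Frobenius is a ring homomorphism, $L_j = k(\alpha_1^{p^j},\ldots,\alpha_l^{p^j})$, so the inclusion $\supseteq$ is trivial. For $\subseteq$, if $e_i \le j$ then applying $(-)^{p^{j-e_i}}$ to \eqref{eq:NGS} and using $k^{p^{j-e_i}} \subseteq k$ yields $\alpha_i^{p^j} \in k(\alpha_1^{p^j},\ldots,\alpha_{i-1}^{p^j})$, so this generator can be dropped; an easy induction on $i$ removes all of them. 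This is precisely where the full Rasala refinement \eqref{eq:NGS} is essential: the weaker relation $\alpha_i^{p^{e_i}} \in k_{i-1}$ built into the bare definition would only place $\alpha_i^{p^j}$ inside $k(\alpha_1^{p^{j-e_i}},\ldots,\alpha_{i-1}^{p^{j-e_i}})$, which is generally strictly larger.

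Writing $t_j := \#\{i : e_i > j\}$, it follows that $L_j$ is generated over $L_{j+1}$ by the $t_j$ elements $\alpha_i^{p^j}$ with $e_i > j$, each of $p$-th power in $L_{j+1}$; hence $[L_j : L_{j+1}] \le p^{t_j}$. Swapping the order of summation gives $\sum_{j \ge 0} t_j = \sum_i e_i$, so combining with the Tower Law,
\[ [k':k] \;=\; \prod_{j\ge 0}[L_j : L_{j+1}] \;\le\; \prod_{j\ge 0} p^{t_j} \;=\; p^{\sum_i e_i} \;=\; \prod_{i=1}^l p^{e_i} \;=\; [k':k], \]
forcing equality $[L_j : L_{j+1}] = p^{t_j}$ throughout.

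Since the $L_j$ depend only on $k'/k$, so does each $t_j = \log_p[L_j : L_{j+1}]$. The sequence $e_1 \ge \cdots \ge e_l$ is the conjugate partition of $(t_0, t_1, \ldots)$—explicitly $l = t_0$ and $e_i = \#\{j \ge 0 : t_j \ge i\}$—so both $l$ and the full exponent sequence are determined by $k'/k$, in a form slightly stronger than the statement. The one genuinely non-routine step is the displayed identity for $L_j$; the rest is a pigeonhole-style dimension count.
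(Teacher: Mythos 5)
Your argument is correct and complete, and it takes a genuinely different route from the paper. The paper disposes of Lemma~\ref{lem:NGS} by citation, invoking the main result of Pickert \cite{Pickert50} together with Rasala's refinement \eqref{eq:NGS}; you instead give a self-contained proof by manufacturing an intrinsic invariant. Your key device is the filtration $L_j := k\cdot(k')^{p^j}$, which is visibly independent of any choice of generators, and the identity $L_j = k(\alpha_i^{p^j} : e_i > j)$, whose nontrivial inclusion follows by applying a further Frobenius power to \eqref{eq:NGS} and dropping redundant generators from the top index down (the monotonicity $e_1 \geq \cdots \geq e_l$ is what makes this termination clean). The dimension count $[L_j:L_{j+1}] \leq p^{t_j}$ with $t_j = \#\{i : e_i > j\}$, squeezed against the Tower Law identity $[k':k] = \prod p^{e_i}$, forces equality everywhere, so the $t_j$ are intrinsic; reading off $l = t_0$ and $e_i = \#\{j : t_j \geq i\}$ as the conjugate partition recovers the full exponent sequence — in fact for all $1 \leq i \leq l$, which is slightly stronger than the lemma as stated (though the case $i = l$ is anyway forced by degree considerations). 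You are also right to flag that the bare definitional property $\alpha_i^{p^{e_i}} \in k_{i-1}$ would not suffice here and that the Rasala-strength relation \eqref{eq:NGS} is genuinely needed: without it, applying $(-)^{p^{j-e_i}}$ would not land $\alpha_i^{p^j}$ inside $k(\alpha_1^{p^j},\ldots,\alpha_{i-1}^{p^j})$. What the citation buys the paper is brevity; what your proof buys is transparency and a concrete description of the invariant, which meshes well with the Frobenius-filtration viewpoint already implicit in the surrounding discussion of $\mm$ in Lemma~\ref{lem:mstuff}.
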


By the lemma, the numbers $\m(k'/k)$, $\mr(k'/k,r)$ defined in the introduction are both invariants of $k'/k$ and $(k'/k,r)$ respectively. To relate these with $\mm$, we first prove:

\begin{lem}\label{lem:mstuff}For a normal generating sequence $\alpha_1,\ldots, \alpha_l$ of $k'/k$, define $a_i:=1\otimes \alpha_i-\alpha_i\otimes 1\in\mm$. Then: 

\begin{enumerate}
	\item the ideal $\mm$ is generated by the $a_i$ over $k'$;
	\item the $p^{e_i}$-power map takes $\mm$ into the $k'$-subspace $k'[a_1^{p^{e_i}},\ldots,a_{i-1}^{p^{e_i}}]\cap \mm$.
\end{enumerate}
\end{lem}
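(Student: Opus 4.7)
The plan is to use the description of $\B = k'\otimes_k k'$ as a $k'$-algebra generated by the simple tensors $1\otimes\alpha_i$, together with the Frobenius endomorphism and the relation \eqref{eq:peipower}, to handle the two parts.

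For (i), first I would note that since $k' = k[\alpha_1,\ldots,\alpha_l]$, the $k'$-algebra $\B$ (with its $k'$-structure coming from the left tensor factor) is generated as a $k'$-algebra by the elements $1\otimes\alpha_i = a_i + \alpha_i\otimes 1$, hence equivalently by $a_1,\ldots,a_l$. So every $b\in\B$ has the form $P(a_1,\ldots,a_l)$ for some polynomial $P$ with coefficients in $k'\otimes 1\cong k'$. Applying $\phi$ and noting $\phi(a_j)=0$, one sees that $\phi(b)$ equals the image of the constant term of $P$ in $k'$. Thus $b\in\mm$ exactly when the constant term of $P$ vanishes, in which case $b$ lies in the ideal of $\B$ generated by $a_1,\ldots,a_l$.

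For (ii), I would exploit that the $p^{e_i}$-power map is a ring endomorphism in characteristic $p$. Writing $b\in\mm$ as $b = \sum_J c_J a_1^{j_1}\cdots a_l^{j_l}$ with $c_J\in k'$ (and zero constant term, by (i)), Frobenius gives
\[b^{p^{e_i}} \;=\; \sum_J c_J^{p^{e_i}}\,(a_1^{p^{e_i}})^{j_1}\cdots (a_l^{p^{e_i}})^{j_l},\]
a polynomial in the $a_j^{p^{e_i}}$ with coefficients in $k'$. What remains is to express $a_j^{p^{e_i}}$ for $j\ge i$ as a polynomial in $a_1^{p^{e_i}},\ldots,a_{i-1}^{p^{e_i}}$ with $k'$-coefficients. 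By \eqref{eq:peipower} we have $\alpha_j^{p^{e_i}} = Q_j(\alpha_1^{p^{e_i}},\ldots,\alpha_{i-1}^{p^{e_i}})$ for some polynomial $Q_j$ over $k$, and since $1\otimes\alpha_m^{p^{e_i}} = a_m^{p^{e_i}} + \alpha_m^{p^{e_i}}\otimes 1$, applying the ring homomorphism $1\otimes(-)$ to this identity yields
\[a_j^{p^{e_i}} \;=\; Q_j\bigl(a_1^{p^{e_i}} + \alpha_1^{p^{e_i}}\otimes 1,\ldots,a_{i-1}^{p^{e_i}} + \alpha_{i-1}^{p^{e_i}}\otimes 1\bigr) - Q_j\bigl(\alpha_1^{p^{e_i}}\otimes 1,\ldots,\alpha_{i-1}^{p^{e_i}}\otimes 1\bigr),\]
which upon expansion lies in $k'[a_1^{p^{e_i}},\ldots,a_{i-1}^{p^{e_i}}]$. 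Substituting back into the formula for $b^{p^{e_i}}$ yields the desired containment; membership in $\mm$ is automatic since $\mm$ is an ideal containing $b$.

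The hard part is mostly notational bookkeeping in (ii): ensuring the polynomial substitutions are well-defined and that the expanded constants land in $k'$ as claimed. Once (i) and \eqref{eq:peipower} are combined with additivity of the Frobenius, (ii) is essentially a direct verification.
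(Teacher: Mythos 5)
Your proof is correct and follows the same overall strategy as the paper's: show that $\B=k'\otimes_k k'$ is generated as a $k'$-algebra by $a_1,\ldots,a_l$, characterize $\mm$ by vanishing constant term, then combine additivity of Frobenius with \eqref{eq:peipower} to reduce part (ii) to rewriting $a_j^{p^{e_i}}$ for $j\ge i$. The one place you genuinely diverge is part (i): the paper constructs the explicit $k'$-basis $\{a_b=1\otimes x^b - x^b\otimes 1\}_{b\neq 0}$ of $\mm$ and proves $a_b\in k'[a_1,\ldots,a_l]$ by induction on $\sum b_i$, whereas you bypass the basis entirely by observing that $\phi$ is a $k'$-algebra homomorphism killing every $a_j$, so $\phi(b)$ equals the constant term of any polynomial representation of $b$. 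Your version is cleaner and avoids the fiddly induction; the paper's basis construction is, however, reused verbatim in the lower-bound computation of Lemma \ref{lem:mbound}, which is presumably why the authors set it up at this stage. For part (ii), your direct substitution into $Q_j(\cdots)-Q_j(\text{scalars})$ is exactly what the paper obtains by re-invoking its part (i) with $i-1$ in place of $l$; the content is the same.
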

\begin{proof} 
(i) Since $\mm$ is a codimension $1$ (left) $k'$-subspace of $\B$, it is a codimension $[k':k]$ $k$-subspace of $\B$. 	Define $J=\{ (b_1,b_2,\ldots,b_l)\in \mathbb{Z}^l  \mid  0\le b_i\le p^{e_i}-1 \}$.
For $b=(b_1,...,b_l)\in J$ we set $x^b=\prod_{i=1}^{l}\alpha_{i}^{b_i}$.
Then $\{x^b\}_{b\in J}$ is a $k$-basis of $k'$ and $\{1\otimes x^b\}$ is a $k'$-basis of $\B$. For $b\neq (0,\ldots, 0)$, the elements  $a_b:=1\otimes x^b-x^b\otimes 1=1\otimes x^b-x^b (1\otimes 1)$ are therefore linearly independent, generating a codimension $1$ subspace of $\B$, which is contained in the codimension $1$ kernel $\mm$ of the multiplication map $x\otimes y\mapsto xy$; hence they form a $k'$-basis of $\mm$. 
We wish to show $a_b\in \mm':=\mm\cap k'[a_1,\ldots,a_l]$ by induction on $\sum b_i$, the case $\sum b_i=1$ being trivial. 
So let $\sum b_i>1$. Without loss of generality, suppose $b_1>0$ and let $b'=b-(1,0,\ldots,0)$. Then inductively $a_{b'}=1\otimes x^{b'}-x^{b'}\otimes 1\in\mm'$. Now one checks directly that $a_b=x^{b'}a_1+\alpha_1x^{b'}-a_{b'}\cdot a_1\in\mm'$ as required. (And it follows that $\mm=\mm'$.)
		
(ii) Define $\mm':=k'[a_1^{p^{e_i}},\ldots,a_{i-1}^{p^{e_i}}]\cap \mm$. From (i), we can express any element of $\mm$ as a polynomial in $a_1,\ldots, a_l$ over $k'$. Since $k'$ has characteristic $p$, the $p^{e_i}$-power map distributes over addition and so it suffices to check that $(a_1^{r_1}\cdots a_l^{r_l})^{p^{e_i}}\in \mm'$ for $r_1,\ldots, r_l\in\mathbb{Z}_{\geq 0}$. For this it suffices to check that $a_j^{p^{e_i}}\in \mm'$ for $j\geq i$. Now $a_j^{p^{e_i}}=(1\otimes \alpha_j -\alpha_j\otimes 1)^{p^{e_i}}=1\otimes\alpha_j^{p^{e_i}}-\alpha_j^{p^{e_i}}\otimes 1$ and by (\ref{eq:peipower}) we can write $\alpha_j^{p^{e_i}}$ as a $k$-polynomial $P$ in $k[\alpha_1^{p^{e_i}},\ldots,\alpha_{i-1}^{p^{e_i}}]$ with no constant term. But applying part (i) with $l=i-1$, $a_j^{p^{e_i}}$ in place of $a_j$ for $1\leq j\leq i-1$ and $k[a_1^{p^{e_i}},\dots a_{i-1}^{p^{e_i}}]$ in place of $k'$, we deduce that $1\otimes P-P\otimes 1$ is indeed a polynomial in the $a_j^{p^{e_i}}$ with $1\leq j\leq i-1$ as required.\end{proof}

We may now identify the minimal integer $n$ such that $\mm^n=0$ with the integer $\m(k'/k)$. In fact we also prove something stronger:

\begin{lem}\label{lem:mbound}
With notation as above, let $d\leq l$ and suppose $m_1,\ldots,m_d\in\mm$ and $f_1,\ldots, f_d\in\mathbb{Z}_{\geq 0}$. 

If $\sum f_i\geq \mr(k'/k,d)-d+1=\sum p^{e^i}-d+1$ then $m_1^{f_1}\cdots m_d^{f_d}=0$. 

In particular, the minimum $n$ such that $\mm^{n}=0$ is $\m(k'/k)$.
\end{lem}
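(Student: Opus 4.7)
The plan is to first establish the ``in particular'' clause---that $\m(k'/k)$ is the nilpotency index of $\mm$---by a normal-form/degree-filtration argument, and then deduce the stronger bound by Euclidean division against $p^{e_d}$ combined with Lemma~\ref{lem:mstuff}(ii) to drop into the smaller purely inseparable extension $K:=k[\alpha_1^{p^{e_d}},\ldots,\alpha_{d-1}^{p^{e_d}}]$ of $k$.

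For the nilpotency step, I will use Lemma~\ref{lem:mstuff}(i) to see that $\mm$ is ideal-generated in $\B$ by $a_1,\ldots,a_l$; the proof of Lemma~\ref{lem:mstuff}(ii) furnishes relations $a_i^{p^{e_i}}=R_i(a_1^{p^{e_i}},\ldots,a_{i-1}^{p^{e_i}})$ with $R_i$ a $k'$-polynomial having no constant term. Every monomial on the right has total $a$-degree at least $p^{e_i}$, matching the left, so reducing an arbitrary monomial $\prod a_j^{r_j}$ to the normal form $\prod a_j^{s_j}$ with $0\leq s_j<p^{e_j}$ does not decrease total degree. Normal-form monomials have maximum degree $\sum(p^{e_j}-1)=\m(k'/k)-1$, so $\mm^{\m(k'/k)}=0$; the normal-form basis element $\prod a_j^{p^{e_j}-1}$ is nonzero in $\B$ and lies in $\mm^{\m(k'/k)-1}$, so the index is sharp. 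This argument applies to any finite purely inseparable extension, so in particular the maximal ideal of $K\otimes_kK$ has nilpotency index $\m(K/k)$.

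For the main statement, I will perform Euclidean division $f_i=q_ip^{e_d}+r_i$ with $0\leq r_i<p^{e_d}$ and use commutativity of $\B$ to factor
\[\prod_{i=1}^d m_i^{f_i}=\Bigl(\prod_i m_i^{r_i}\Bigr)\cdot\prod_{i=1}^d\bigl(m_i^{p^{e_d}}\bigr)^{q_i}.\]
Lemma~\ref{lem:mstuff}(ii) places each $m_i^{p^{e_d}}$ in $\mathfrak{n}':=k'[a_1^{p^{e_d}},\ldots,a_{d-1}^{p^{e_d}}]\cap\mm$; the enveloping subring identifies with $k'\otimes_kK$ and $\mathfrak{n}'$ with the kernel of the multiplication $k'\otimes_kK\to k'$. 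Flatness of $k'/K$ gives $(\mathfrak{n}')^n=k'\otimes_K\mathfrak{m}_K^n$, where $\mathfrak{m}_K$ denotes the maximal ideal of $K\otimes_kK$, so the nilpotency index of $\mathfrak{n}'$ coincides with $\m(K/k)$. A normal generating sequence of $K/k$ is provided by those $\alpha_j^{p^{e_d}}$ with $j<d$ and $e_j>e_d$, with exponents $e_j-e_d$; this yields $\m(K/k)=\sum_{j=1}^d p^{e_j-e_d}-d+1$ (terms with $e_j=e_d$ contributing $p^0-1=0$). A ceiling estimate using $\sum r_i\leq d(p^{e_d}-1)$ produces
\[\sum q_i\geq\frac{\sum f_i-d(p^{e_d}-1)}{p^{e_d}}\geq\sum p^{e_i-e_d}-d+\frac{1}{p^{e_d}}>\sum p^{e_i-e_d}-d,\]
so the integer $\sum q_i\geq\m(K/k)$, the second factor lies in $(\mathfrak{n}')^{\sum q_i}=0$, and the product vanishes.

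The main obstacle will be locating the correct intermediate subring $k'\otimes_kK\subseteq\B$. Once Lemma~\ref{lem:mstuff}(ii) is recognized as the mechanism to drop $p^{e_d}$-powers of elements of $\mm$ into this subring, the nilpotency computation via flatness and the concluding ceiling bound follow routinely.
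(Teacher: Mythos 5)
Your proof is correct but takes a genuinely different route from the paper's. The paper proves the main inequality directly by induction on $d$: after Euclidean division $f_i = q_ip^{e_d}+r_i$ it expands via Lemma~\ref{lem:mstuff}(ii) into monomials $a_1^{\nu_1}\cdots a_{d-1}^{\nu_{d-1}}$ and pushes a chain of inequalities on the $\nu_i$ to force $\nu_1\geq p^{e_1}$; the ``in particular'' clause then falls out by setting $d=l$. You invert the dependency: you establish the ``in particular'' clause first (the nilpotency index of $\mm$ equals $\m(k'/k)$), via a degree-filtration/normal-form argument on $\B=k'[a_1,\ldots,a_l]$, and then deduce the main inequality in one shot by dropping the $p^{e_d}$-th powers into the subring $k'\otimes_kK\subseteq\B$ with $K=k[\alpha_1^{p^{e_d}},\ldots,\alpha_{d-1}^{p^{e_d}}]$, identifying $\mathfrak{n}'\cong k'\otimes_K\mathfrak{m}_K$ by flatness, and applying the ``in particular'' clause to $K/k$. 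What your route buys is a cleaner conceptual structure (the general bound is literally the $d=l$ case pushed down a subextension); what it costs is a couple of auxiliary verifications that you state but do not prove. Specifically: (a) you need the termination of the reduction to normal-form monomials --- total degree does not decrease, but it can increase, and you need degree bounded (which follows from $a_i^{p^e}=0$) together with a lexicographic decrease at fixed degree; and (b) you need that $\alpha_1^{p^{e_d}},\ldots,\alpha_{d-1}^{p^{e_d}}$ (or the subsequence with $e_j>e_d$) forms a generating sequence of $K/k$ satisfying the analogue of~\eqref{eq:NGS} with decreasing exponents $e_j-e_d$, so that Lemma~\ref{lem:mstuff}(ii) and the nilpotency formula transfer to $K/k$. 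Both are true and routine to verify, but they are the points where your argument silently relies on facts the paper's more explicit computation makes unnecessary.
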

\begin{proof}
Assume for a contradiction that $m:=m_1^{f_1}\cdots m_d^{f_d}\neq 0$. We proceed by induction on $d$, observing that the product of any factors of $m$ is also non-zero. It does no harm to assume that $f_1\geq \ldots \geq f_d$. If $d=1$, then $\mr(k'/k,1)=p^{e_1}=p^e$ and indeed $m=0$, a contradiction. Otherwise, let $c$ be maximal such that $f_c\geq p^{e_c}$. This implies $\sum_{i=1}^c f_i\geq \mr(k'/k,c)-c+1$ so if $c<d$, then $m_1^{f_1}\cdots m_c^{f_c}$ is zero by induction, a contradiction. Therefore $c=d$. Let $(q_i,r_i)$ be the quotient and remainder when $p^{e_d}$ is divided into $f_i$. As $f_i\geq f_d\geq p^{e_d}$ we have each $q_i\geq 1$. Now \[f_1+\ldots+f_d=p^{e_d}(q_1+\ldots+q_d)+(r_1+\ldots+r_d)\geq p^{e_1}+\ldots+p^{e_d}-d+1\]
and so
\begin{align*}p^{e_d}(q_1+\ldots+q_d)&\geq p^{e_d}(q_1+\ldots+q_{d-1}+1)\\
&\geq p^{e_1}+\ldots+p^{e_d}-d+1-d(p^{e_d}-1)\\
&=p^{e_d}(p^{e_1-e_d}+\ldots+p^{e_{d-1}-e_d}-d+1)+1.\end{align*}
Since the second expression above is a multiple of $p^{e_d}$, we have \[q_1+\ldots+q_{d}\geq p^{e_1-e_d}+\ldots+p^{e_{d-1}-e_d}-(d-2).\]
Now as $q_ip^{e_d}\leq f_i$ for each $i$, we have $m'=m_1^{q_1p^{e_d}}\cdots m_d^{q_dp^{e_d}}\neq 0$. By Lemma \ref{lem:mstuff}(ii), we may write each $m_i^{p^{e_d}}$ as a polynomial in $k'[a_1^{p^{e_d}},\ldots,a_{d-1}^{p^{e_d}}]$. After expanding $m'$, any constituent monomial will be of the form $a_1^{\nu_1}\cdots a_{d-1}^{\nu_{d-1}}$ where $\sum_{i=1}^{d-1}\nu_i=p^{e_d}\sum_{i=1}^d q_i\geq p^{e_1}+\ldots+p^{e_{d-1}}-p^{e_d}(d-2)$ and each $\nu_i$ is divisible by $p^{e_d}$. Furthermore, as $a_{d-1}^{p^{e_{d-1}}}$ can be re-expressed in terms of the $a_i$ with $i<d-1$, we may assume $\nu_{d-1}\leq p^{e_{d-1}}-p^{e_d}$. It follows that 
\[\sum_{i=1}^{d-2}\nu_i\geq p^{e_1}+\ldots+p^{e_{d-2}}-p^{e_d}(d-3).\]
Continuing inductively, $\nu_1\geq p^{e_1}-p^{e_d}+1$. Thus $\nu_1\geq p^{e_1}$ and hence all monomials in $m'$ are zero, a contradiction.

We now tackle the second statement. To see that  $n\geq \m(k'/k)$, observe that $a:=\prod_{i=1}^{l}a_i^{p^{e_i}-1}$ is a product of $\m(k'/k)-1$ elements from $\{a_1,\ldots, a_l\}$. Expanding this product in terms of the $k$-basis $\{x^b\otimes x^c\mid b,c\in J\}$ of $\B$, the coefficient of $1\otimes x^b$ where $b=(p^{e_1}-1,\ldots,p^{e_l}-1)$ is easily seen to be $1$. It follows that $a\neq 0$.

For the upper bound, take $d=l$ in the first part of the lemma, noting that $\m(k'/k)=\mr(k'/k,l)-l+1$. 
\end{proof}

\section{Bounds in the case of the general linear group}

In this section, $G'=\GL_r$ and $G=\R_{k'/k}(G')$, where $k'/k$ is purely inseparable of exponent $e$. 
In \cite[Lem.~4.4]{BMRS19} it was shown that the exponent $e(R)$ of $R:=\RR_u(G_{\bar k})$ is at most $s$, where $s$ is chosen such that $p^s\geq r^2(p^e-1)$.
Using the Cayley--Hamilton theorem in its full generality (see \cite[III.8.11]{Bour98}) yields an improvement.
\begin{proof}[Proof of Theorem \ref{thm:1}(i)] 
	Let $M\in\Mat(r,\mm)$, as in Lemma \ref{lem:expglr}. 
	By the Cayley--Hamilton theorem, we have $\chi_M(M)=0$, where $\chi_M(\lambda)=\lambda^r+\sum_{t=0}^{r-1}f_t \lambda^t$ is the characteristic polynomial of $M$ with degree $r$. 
	Observe that the coefficient $f_t$ is either zero or a homogeneous polynomial in the entries of $\mm$ with degree $r-t$. 
	Since the matrices $M^{r-1},\ldots,M,I_r$ all commute and $k'\otimes_{k} k'$ has characteristic $p$, we get $$M^{rp^e}=-(f_{r-1}^{p^e}M^{(r-1)p^e}+\ldots+f_1^{p^e}M^{p^e}+f_0^{p^e}I_r)=0,$$
	since $f_t^{p^e}=0$ for all $f_t\in \mm$. 
	Choosing $s$ such that $p^s\ge p^er$ we get $M^{p^s}=M^{p^er}M^{p^s-p^er}=0$.
\end{proof}

Let $B'\subseteq G'$ be the upper Borel subgroup, whose points are upper triangular matrices, and $B=\R_{k'/k}(B')\subseteq G$ its Weil restriction---a pseudo-Borel subgroup in the terminology of \cite[C.2]{CPclassification16}. We now specify certain elements $M\in \Mat(r,\mm)$ with $M\in B(k')=B'(\B)$ to construct lower bounds on $e(R)$ which depend on $r$ and the exponents $e_i$ of a normal generating sequence.
Moreover, we calculate exactly the exponent of the intersection $R\cap B_{\bar k}$.

Before we give the proof, we make some elementary remarks about powers of matrices.
If $A$ is a commutative ring and $E_{i,j}\in\Mat(r,A)$ is the $r\times r$ elementry matrix with $e_{i,j}=1$ and zeros everywhere else, then we have $E_{i,j}E_{l,s}=E_{i,s}$ if and only if $j=l$ and $0$ else. Write $M=\sum a_{ij}E_{i,j}$ for a general element of $\Mat(r,A)$ . 
Then after expansion, any non-zero term in the power $M^n$ is of the form $a_w\cdot E_{i_1,i_{n+1}}$ where $a_w=a_{i_1,i_2}\cdot a_{i_2,i_3}\cdots a_{i_l,i_{n+1}}$, and $w\in (i_1,\ldots,i_{n+1})\in \{1,\ldots,r\}^{n+1}$.  
Furthermore, the $(i,j)$-th entry of $M^n$ is the sum over all $a_w$ such that $i_1=i$ and $i_{n+1}=j$. For $w\in \{1,\ldots,r\}^{n+1}$, say $n$ is the \emph{length} of $w$.
Now, if $M$ is an upper triangular matrix, then $a_w$ will be zero unless $i_j\leq i_{j+1}$ for each $1\leq j\leq n+1$. 
Hence the only $w$ yielding non-zero $a_w$ are non-decreasing sequences of integers from $\{1,\ldots,r\}$, whose length is bounded above by $\m(k'/k)$ (Lemma \ref{lem:mbound}).

\begin{proof}[Proof of Theorem \ref{thm:1}(ii)-(iv)]We work inside $B'(\mm)$. There are two cases. Suppose we have $\sum_{i=r+1}^l(p^{e_i}-1)<r-1$. Then there is an integer $q<r$ with $q$ maximal such that $(p^{e_{q+1}}-1)+\ldots+(p^{e_l}-1)\geq q-1$. Define the matrix $M\in B'(\mm)$ as follows. 

(i) Start with $M=0$;

(ii) From the top left corner, fill the first $q$ spaces on the diagonal with $a_1,\ldots, a_q$ respectively;

(iii) Fill up the first $q-1$ elements of the superdiagonal with $p^{e_l}-1$ entries of $a_l$, then $p^{e_{l-1}}-1$ entries of $a_{l-1}$ and so on;

(iv) If one has now written down $p^{e_{q+1}}-1$ entries $a_q$, then stop. Otherwise put $a_q$ in the $q$th position of the superdiagonal. If, after this, there are only $p^{e_{q+1}}-1-\tau$ entries $a_q$ with $\tau\geq 1$ on the superdiagonal, we set also the $(q+1)$st diagonal element equal to $a_q$.

We claim $M^{\m(k'/k)-1}\neq 0$. 
Take the sequence \[w=(\underbrace{1,\ldots,1}_{p^{e_1}\text{ times}},\underbrace{2,\ldots,2}_{p^{e_2}\text{ times}},\ldots, \underbrace{q,\ldots,q}_{p^{e_q}\text{ times}},\underbrace{q+1,\ldots,q+1}_{\tau+1\text{ times}})\] and observe that $a_w\neq 0$. 
A little thought shows that this is the unique sequence $w$ of this length with $a_w\neq 0$ starting at $1$ and ending at $q+1$.
This proves the claim. Hence we conclude $e(R\cap B_{\bar k})\geq \Em(k'/k)=\E(k'/k,r)$. Since $M^{\m(k'/k)}=0$ by Lemmas \ref{lem:mbound} and \ref{lem:expglr}, we have $e(R\cap B_{\bar k})\leq \Em(k'/k)$ and so this case is done.

Now suppose $\sum_{i=r+1}^l(p^{e_i}-1)\geq r-1$. One constructs a similar $M$. Start as before with $M=0$. One fills the diagonal with $a_1,\ldots, a_r$ and the superdiagonal with any $r-1$ elements from the list $(\underbrace{a_{r+1},\ldots,a_{r+1}}_{p^{e_{r+1}}-1\text{ times}},\ldots,\underbrace{a_l,\ldots,a_l}_{p^{e_l}-1 \text{ times}})$. 
This gives a matrix $M$ such that $M^m\neq 0$ where $m=\sum_{i=1}^r(p^{e_i}-1)+r-1=\mr(k'/k,r)-1$. Hence $e(R\cap B_{\bar k})\geq \emr(k'/k,r)$.

We need to demonstrate the upper bound for this case. Take a matrix $M\in B'(\mm)$ and any sequence $w$ of length $\mr(k'/k)$ such that $a_{w}\neq 0$ in a product of $\mr(k'/k,r)$ copies of $M$. Suppose the entries on the diagonal of $M$ are $m_1,\ldots,m_r\in\mm$. Then $a_w=m_1^{f_1}\cdots m_r^{f_r}\cdot\mu$ for non-negative integers $f_1,\ldots,f_r$ and $\mu$ a product of $s$ entries from above the diagonal; we have $s\leq r-1$. Since $\mr(k'/k)=\sum f_i+s$, we have $\sum f_i\geq \mr(k'/k,r)-r+1$. We are now done by Lemma \ref{lem:mbound}.

For part (iii) of the theorem, we note that any lower bound for $e(R\cap B_{\bar k})$ is automatically one for $e(R)$. For part (iv) we just observe that $\E(k'/k,r)=\Em(k'/k)$ is also an upper bound for $e(R)$ by Lemmas \ref{lem:mbound} and \ref{lem:expglr}.\end{proof}

\section{Extended example of \texorpdfstring{$\SL_2$}{SL2} and \texorpdfstring{$\PGL_2$}{PGL2}}
\label{sec:exm}
In this section we give a complete description of $e(R)$ in case $G'=\SL_2,\PGL_2$ or $\GL_2$.

\begin{prop} 
	Let $k'/k$ be purely inseparable of characteristic $p$ and exponent $e$. 
	Let $G$ be $\R_{k'/k}(\GL_2)$, $\R_{k'/k}(\PGL_2)$ or $\R_{k'/k}(\SL_2)$, the last of these only if $p\neq 2$. 
	Then $e(R)$ is $e$ if $k'/k$ is simple and $e+1$ otherwise.
\end{prop}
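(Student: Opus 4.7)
The plan is to handle each group in turn, writing $e_{\GL_2}, e_{\SL_2}, e_{\PGL_2}$ for $e(R)$ in the respective cases. The $\GL_2$ case follows directly from Theorem~\ref{thm:1} with $r=2$: part~(i) delivers $e_{\GL_2}\leq e+1$ since $p^{e+1}\geq 2p^e$. In the simple case ($l=1$), $\m(k'/k)=p^e$ and $\mr(k'/k,2)=p^e+1$ give $\E(k'/k,2)=e$, the hypothesis of part~(iv) is vacuous, and hence $e_{\GL_2}=e$. In the non-simple case ($l\geq 2$), both $\m(k'/k)$ and $\mr(k'/k,2)$ lie strictly between $p^e$ and $p^{e+1}$, so $\E(k'/k,2)=e+1$, and part~(iii) together with the upper bound yields $e_{\GL_2}=e+1$.

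For $\SL_2$ with $p\neq 2$, the multiplication map $\mu:\SL_2\times\Gm\to\GL_2$, $(A,\lambda)\mapsto\lambda A$, is an étale isogeny with central kernel $\mu_2$. By Proposition~\ref{prop:kernelqG}(iv), the induced map of Weil restrictions is an isomorphism of geometric unipotent radicals, so $e_{\GL_2}=\max(e_{\SL_2},e)$, where we used $e(R_{\R_{k'/k}(\Gm)})=e$ from Remark~(ii). In the non-simple case this forces $e_{\SL_2}=e+1$. In the simple case it only yields $e_{\SL_2}\leq e$, and the matching lower bound comes from the torus element $\diag(1+a,(1+a)^{-1})\in\SL_2(\B)$, where $a=1\otimes\alpha-\alpha\otimes 1$: its $p^s$-power equals the identity iff $a^{p^s}=0$, and $a^{p^{e-1}}\neq 0$ by Lemma~\ref{lem:mbound}.

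For $\PGL_2$ (any $p$), the local ring description $\PGL_2(\B)=\GL_2(\B)/\B^\times$ realises the kernel of $q_{\PGL_2}$ on $k'$-points as a quotient of the kernel of $q_{\GL_2}$ on $k'$-points, giving the upper bound $e_{\PGL_2}\leq e_{\GL_2}$. For the matching lower bound in the simple case, the class $[\diag(1+a,1)]\in\PGL_2(\B)$ has $p^{e-1}$-th power represented by a non-scalar matrix since $a^{p^{e-1}}\neq 0$, forcing $e_{\PGL_2}\geq e$. In the non-simple case I plan to use $A=I+a_1 E_{1,1}+a_2 E_{1,2}\in\GL_2(\B)$; a short induction on powers of the upper-triangular nilpotent part $M=A-I$ yields $M^n=a_1^n E_{1,1}+a_2 a_1^{n-1}E_{1,2}$, so $A^{p^e}=I+M^{p^e}=I+(a_2 a_1^{p^e-1})E_{1,2}$ using $a_1^{p^e}=0$. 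If $a_2\cdot a_1^{p^e-1}\neq 0$ then $[A]^{p^e}\neq[I]$ in $\PGL_2(\B)$, giving $e_{\PGL_2}\geq e+1$.

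The main obstacle will be justifying this nonvanishing. The total exponent $p^e$ sits below the threshold $p^{e_1}+p^{e_2}-1$ of Lemma~\ref{lem:mbound}, so that lemma does not apply directly. Instead, I plan to expand $(1\otimes\alpha_1-\alpha_1\otimes 1)^{p^e-1}(1\otimes\alpha_2-\alpha_2\otimes 1)$ in the standard $k$-basis $\{x^b\otimes x^c\}$ of $\B$ from the proof of Lemma~\ref{lem:mstuff}(i) and read off that the coefficient of $1\otimes\alpha_1^{p^e-1}\alpha_2$ is $1$; once this is in hand, all the remaining assertions are immediate.
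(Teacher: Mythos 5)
Your proposal is correct, but it takes a genuinely different route to $\SL_2$ and $\PGL_2$ than the paper does. For $\GL_2$ you and the paper use Theorem~\ref{thm:1} identically (though note that in the non-simple case $\m(k'/k)$ need not lie below $p^{e+1}$, and $\mr(k'/k,2)$ can equal $p^{e+1}$ when $p=2$ and $e_1=e_2$; neither slip affects $\E(k'/k,2)=e+1$). The paper then passes from $\GL_2$ to $\PGL_2$ by noting that the smooth surjection $\GL_2\to\PGL_2$ gives the upper bound and that the explicit witnesses from Theorem~\ref{thm:1}(ii)--(iv), having a nonzero off-diagonal entry, remain nontrivial in the quotient, and finally handles $\SL_2$ via the \'etale isogeny $\SL_2\to\PGL_2$ and Proposition~\ref{prop:kernelqG}(iv). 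You instead treat $\SL_2$ directly through the \'etale central isogeny $\SL_2\times\Gm\to\GL_2$ (valid for $p\neq 2$ since the kernel is $\mu_2$), so that $e_{\GL_2}=\max(e_{\SL_2},e)$, and then verify $e_{\SL_2}\ge e$ in the simple case with the diagonal torus element; this nicely decouples $\SL_2$ from $\PGL_2$. For $\PGL_2$ you exploit that $\B$ is local so $\PGL_2(\B)=\GL_2(\B)/\B^\times$, build an explicit upper-triangular $A$ with $A^{p^e}=I+a_1^{p^e-1}a_2\,E_{1,2}$, and read off nonvanishing of $a_1^{p^e-1}a_2$ in the $k$-basis $\{x^b\otimes x^c\}$ exactly as in Lemma~\ref{lem:mstuff}(i). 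This is more explicit and self-contained than the paper's one-line assertion that the witnesses ``are clearly non-zero in their image in $\PGL_2(\B)$'', and rightly emphasizes that for $\PGL_2$ what must be checked is non-scalarity rather than non-triviality; the trade-off is that the paper's argument is shorter because it reuses the Theorem~\ref{thm:1} witnesses rather than building a new one.
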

\begin{proof} 
	If $G'$ is $\GL_2$, then Theorem \ref{thm:1}(iv) gives the exponent $e(R)=e$ for $k'/k$ simple of exponent $e$, and at least $e+1$ when $k'/k$ is non-simple.	However, by Theorem \ref{thm:1}(i) we have $e(R)$ at most $e+1$. 
	As the map $\GL_2\to\PGL_2$ is smooth, and $\SL_2\subset\GL_2$, we immediately obtain $e$ or $e+1$ respectively as upper bounds for both the cases $G'=\SL_2$ and $G'=\PGL_2$. 
	In fact, the matrices constructed in the proof of Theorem \ref{thm:1}(ii)-(iv) and their relevant powers are clearly non-zero in their image in $\PGL_2(\B)$ and so the exponents of the radicals in case $G'=\GL_2$ and $G'=\PGL_2$ coincide. 
	Now by Proposition \ref{prop:kernelqG}(iv) these also coincide for $\SL_2$ outside of characteristic $2$, since the isogeny $f:\SL_2\to\PGL_2$ is smooth.
\end{proof}

We see some different behaviour for $\SL_2$ in characteristic $2$; in particular, the exponents of $R_u(G_{\bar{k}})\cap R_u(B_{\bar{k}})$ and $R_u(G_{\bar{k}})$ do not coincide.

Recall that a unital ring $R$ has characteristic $2$ if the equation $1+1=0$ holds in $R$.
We prove a general formula for $2^s$-th powers of matrices over rings of characteristic $2$. \begin{lem}
	\label{lem:formp=2}
	Let $R$ be a commutative ring of characteristic $2$ and $M=\begin{pmatrix}
	a & b \\
	c & d
	\end{pmatrix}$ with $a,b,c,d\in R$. Then for all $s\in \mathbb{N}_0$ we have
	\begin{align*}
	M^{2^s}=\begin{pmatrix}
	a^{2^s}+\sum_{i=0}^{s-1} b^{2^i}c^{2^i}t^{2^s-2^{i+1}} & bt^{2^s-1} \\
	ct^{2^s-1} & d^{2^s}+\sum_{i=0}^{s-1} b^{2^i}c^{2^i}t^{2^s-2^{i+1}}
	\end{pmatrix},
	\end{align*} 
	where $t=a+d$.
\end{lem}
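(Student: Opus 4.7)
The plan is a straightforward induction on $s$, exploiting the fact that the displayed matrix has a very rigid shape in characteristic $2$: its off-diagonal entries differ from those of $M$ only by the scalar $t^{2^s-1}$, and its two diagonal entries differ by $a^{2^s}-d^{2^s}=a^{2^s}+d^{2^s}=t^{2^s}$. This rigidity is what makes the square of such a matrix computable.

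For the base case $s=0$, both empty sums vanish and the formula reduces to $M^{2^0}=M$. For the inductive step, assume the formula for $s$. Write $M^{2^s}=\bigl(\begin{smallmatrix}\alpha & \beta\\ \gamma & \delta\end{smallmatrix}\bigr)$ with $\beta=bt^{2^s-1}$, $\gamma=ct^{2^s-1}$ and
\[\alpha=a^{2^s}+\Sigma,\qquad \delta=d^{2^s}+\Sigma,\qquad \Sigma:=\sum_{i=0}^{s-1}b^{2^i}c^{2^i}t^{2^s-2^{i+1}}.\]
The key observation is that $\alpha+\delta=a^{2^s}+d^{2^s}=(a+d)^{2^s}=t^{2^s}$, using $\mathrm{char}\,R=2$. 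Computing $(M^{2^s})^2$, the off-diagonal entries become $\beta(\alpha+\delta)=bt^{2^s-1}\cdot t^{2^s}=bt^{2^{s+1}-1}$ and similarly for the lower-left, matching the statement for $s+1$.

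For the diagonal entry in position $(1,1)$, we expand $\alpha^2+\beta\gamma$. Again using $\mathrm{char}\,R=2$, the Frobenius is additive, so
\[\alpha^2=a^{2^{s+1}}+\sum_{i=0}^{s-1}b^{2^{i+1}}c^{2^{i+1}}t^{2^{s+1}-2^{i+2}},\]
and the index shift $j=i+1$ turns this sum into $\sum_{j=1}^{s}b^{2^j}c^{2^j}t^{2^{s+1}-2^{j+1}}$. The remaining term $\beta\gamma=bc\,t^{2^{s+1}-2}$ is exactly the missing $j=0$ summand, so $\alpha^2+\beta\gamma$ equals $a^{2^{s+1}}+\sum_{j=0}^{s}b^{2^j}c^{2^j}t^{2^{s+1}-2^{j+1}}$, which is the required $(1,1)$-entry. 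The $(2,2)$-entry follows by the identical calculation with $a$ replaced by $d$.

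There is no real obstacle here: the only points to watch are the index shift in the diagonal sums and the fact that the $\beta\gamma$ contribution supplies exactly the missing $j=0$ term. Both are immediate consequences of working in characteristic $2$, which makes Frobenius additive and collapses $\alpha+\delta$ to $t^{2^s}$.
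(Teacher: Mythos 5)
Your proof is correct and follows essentially the same route as the paper: induction on $s$, squaring $M^{2^s}$, using additivity of Frobenius in characteristic $2$, and recognizing that $\Sigma^2 + bc\,t^{2^{s+1}-2}$ is the shifted sum for $s+1$ (the paper records this as $l_s^2 + bct^{2^{s+1}-2} = l_{s+1}$ before squaring). The only difference is cosmetic — you name the four entries up front and emphasize $\alpha+\delta=t^{2^s}$, whereas the paper substitutes directly — but the computation and the key identity are the same.
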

\begin{proof}
	We argue by induction on $s$. For $s=0$, we just get $M$ on both sides.
	So assume the result for $s$ and set $l_s=\sum_{i=0}^{s-1} b^{2^i}c^{2^i}t^{2^s-2^{i+1}}$. 
	We note that 
	\begin{align*}
	l_s^2+bct^{2^{s+1}-2}=\sum_{i=1}^{s} b^{2^{i}}c^{2^{i}}t^{2^{s+1}-2^{i+1}}+bct^{2^{s+1}-2}=l_{s+1}
	\end{align*}
	holds. And so we get 
	\begin{align*}
	M^{2^{s+1}}
	=(M^{2^s})^2
	&=\begin{pmatrix}
	a^{2^{s+1}}+l_s^2+bct^{2^{s+1}-2} & bt^{2^s-1}(a^{2^s}+d^{2^s}) \\
	ct^{2^s-1}(a^{2^s}+d^{2^s}) & d^{2^{s+1}}+l_s^2+bct^{2^{s+1}-2}
	\end{pmatrix}\\
	&=\begin{pmatrix}
	a^{2^{s+1}}+l_{s+1} & bt^{2^{s+1}-1} \\
	ct^{2^{s+1}-1} & d^{2^{s+1}}+l_{s+1}
	\end{pmatrix}.\qedhere
	\end{align*}
\end{proof}

\begin{prop}\label{prop:sl2} 
Let $G'=\SL_2$, $p=2$ and suppose $k'/k$ has normal generating sequence $\alpha_1,\ldots,\alpha_l$ with exponents $e_1\geq e_2\geq \ldots \geq e_l$. Let $B'$ be an upper Borel subgroup of $G'$ and $B=\R_{k'/k}(B')$ a pseudo-Borel. 

(i) We have $e(R\cap B_{\bar k})=e$; 

(ii) We have $e(R)=\begin{cases}e+1\text{ if }e_1=e_2\\
e\text{ otherwise.}\end{cases}$\end{prop}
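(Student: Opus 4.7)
The plan is to work with the set of $k'$-points $\{M = I + N \in \SL_2(\B) : N \in \Mat(2, \mm)\}$ of the $k'$-descent of $R$, where the determinant condition $\det(I + N) = 1$ reads $a + d + ad = bc$ for $N = \bigl(\begin{smallmatrix} a & b \\ c & d \end{smallmatrix}\bigr)$. Since $I$ and $N$ commute and $\Char(\B) = 2$, $M^{2^s} = I + N^{2^s}$. Cayley--Hamilton simplifies matters: the constraint yields $\det(N) = ad - bc = a + d =: \tau$, so $N^2 = \tau N + \tau I = \tau M$; iterating gives
\[N^{2^s} = \tau^{2^s - 1} N + S_s \cdot I, \qquad S_s := \sum_{i=0}^{s-1} \tau^{2^s - 2^i}.\]
Throughout, the basic fact $m^{2^e} = 0$ for every $m \in \mm$ (from Lemma \ref{lem:mstuff}(i), using $c_b^{2^e} \in k$ and $a_b^{2^e} = 0$) will be used without further comment.

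For (i), restricting to upper-triangular $N$ forces $(1 + a)(1 + d) = 1$, so $a + d = a^2/(1 + a)$; hence $(a + d)^{2^e - 1} = a^{2^{e+1} - 2}/(1 + a)^{2^e - 1} = 0$ because $a^{2^{e+1} - 2} = a^{2^e} \cdot a^{2^e - 2} = 0$. The standard power formula for upper-triangular matrices then gives $N^{2^e} = 0$. For the lower bound, taking $a = a_1$, $b = 0$ and $d = a_1/(1 + a_1)$ makes the $(1,1)$-entry of $N^{2^{e-1}}$ equal to $a_1^{2^{e-1}} = 1 \otimes \alpha_1^{2^{e-1}} - \alpha_1^{2^{e-1}} \otimes 1$, which is non-zero since $\alpha_1^{2^{e-1}} \notin k$.

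For the unconditional upper bound $e(R) \leq e + 1$ in (ii), every summand of $S_{e+1}$ has $\tau$-exponent at least $2^e$, and $\tau^{2^{e+1} - 1} = 0$, so $N^{2^{e+1}} = 0$. The key observation for (ii) is that $S_e = \tau^{2^{e-1}} \cdot u$ with $u \in \B^\times$ (of the form $1 + $ nilpotent) and $\tau^{2^e - 1} = \tau^{2^{e-1}} \cdot \tau^{2^{e-1} - 1}$, so $N^{2^e} = 0$ as soon as $\tau^{2^{e-1}} = 0$. For the lower bound when $e_1 = e_2 = e$, I will take $N = \bigl(\begin{smallmatrix} 0 & a_1 \\ a_2 & a_1 a_2 \end{smallmatrix}\bigr)$, which satisfies the determinant constraint with $\tau = a_1 a_2$ and $\tau^{2^{e-1}} = a_1^{2^{e-1}} a_2^{2^{e-1}}$; direct expansion in the basis $\{x^b \otimes x^c\}$ of $\B$ exhibits this as a sum of four distinct basis vectors, hence non-zero. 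A short check confirms $\tau^{2^e - 1} N = 0$, so $N^{2^e} = S_e \cdot I \neq 0$.

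The main obstacle is the upper bound when $e_1 > e_2$: one must show $\tau^{2^{e-1}} = 0$ for every admissible $N$. The key structural claim is that for every $m \in \mm$, $m^{2^{e-1}} \in k' \cdot a_1^{2^{e-1}}$. To prove it, Lucas's theorem in characteristic $2$ gives $a_b^{2^{e-1}} = 1 \otimes (x^b)^{2^{e-1}} - (x^b)^{2^{e-1}} \otimes 1$; an induction on $j$ using \eqref{eq:NGS} and $e_j \leq e - 1$ for $j \geq 2$ shows $\alpha_j^{2^{e-1}} \in k(\beta)$ for every $j$, where $\beta := \alpha_1^{2^{e-1}}$ satisfies $\beta^2 \in k$. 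Writing $(x^b)^{2^{e-1}} = y_b + z_b \beta$ with $y_b, z_b \in k$ yields $a_b^{2^{e-1}} = z_b \cdot a_1^{2^{e-1}}$, proving the claim by linearity. The immediate consequence is $m_1^{2^{e-1}} m_2^{2^{e-1}} = 0$ for all $m_1, m_2 \in \mm$ (the product lies in $(k')^2 \cdot a_1^{2^e} = 0$), so that the $\SL_2$ relation $\tau = bc + ad$ forces
\[\tau^{2^{e-1}} = (bc)^{2^{e-1}} + (ad)^{2^{e-1}} = b^{2^{e-1}} c^{2^{e-1}} + a^{2^{e-1}} d^{2^{e-1}} = 0.\]
The matching lower bound $e(R) \geq e$ follows from (i) since $R \cap B_{\bar k} \leq R$.
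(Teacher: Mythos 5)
Your proof is correct and follows essentially the same route as the paper's: a closed-form expression for $2^s$-th powers of $2\times 2$ matrices in characteristic~$2$, combined with the structural fact that $m^{2^{e-1}}\in k'\cdot a_1^{2^{e-1}}$ for all $m\in\mm$ (which the paper extracts from its Lemma~\ref{lem:mstuff}(ii), while you re-derive it directly from the normal generating sequence), and a near-identical test matrix (yours is the transpose of the paper's) for the lower bound when $e_1=e_2$. The only real difference is presentational: you derive the power formula via Cayley--Hamilton and the $\SL_2$ constraint $\det N=\operatorname{tr} N$, giving $N^2=\tau M$ and the tidy factorisation $N^{2^e}=\tau^{2^{e-1}}(\tau^{2^{e-1}-1}N+uI)$, whereas the paper proves a general-purpose $2^s$-power formula (Lemma~\ref{lem:formp=2}) by induction and imposes the determinant constraint afterwards; you also reach $\tau^{2^{e-1}}=(bc)^{2^{e-1}}+(ad)^{2^{e-1}}$ directly from $\tau=ad+bc$ rather than solving for $d$. (Minor note: what you invoke as Lucas's theorem is just the Frobenius identity $(x+y)^{2^s}=x^{2^s}+y^{2^s}$.)
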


\begin{proof} We first calculate the exponent of $B'(\mm)=\ker B'(\B)\to B'(k')$. A general element is $I+M$ with $M=\begin{pmatrix}
a &b\\
0 & d\\
\end{pmatrix}$, where $a,b,d\in\mm=\langle a_1,\ldots,a_l\rangle$, satisfying $a+d+ad=0$ from the condition $\det(I+M)=1$. As $a\in\mm$, $1+a$ is invertible in $\B$, and so it follows that $d=a(1+a)^{-1}$ and $a+d=ad=a^2(1+a)^{-1}$. By Lemma \ref{lem:formp=2}, the $2^s$-th power of this matrix is \[\begin{pmatrix}
a^{2^s} &b(a^2(1+a)^{-1})^{2^s-1}\\
0 & d^{2^s}\\
\end{pmatrix}.\]
Choosing for example $a=a_1=1\otimes \alpha_1-\alpha_1\otimes1$, one sees the minimal $s$ for which $M^{2^s}=0$ is at least $e_1=e$ and also that if $s\geq e$ then the diagonal entries vanish. For $s=e$, the off-diagonal entry is a multiple of $(a^2)^{2^e-1}$, which has at least $2^e$ factors of $a$, hence is also. So $e(R\cap B_{\bar k})=e$, proving (i).

For (ii), a general element of $G'(\mm)$ is a matrix $I+M$ where $M=\begin{pmatrix}
a &b\\
c & d\\
\end{pmatrix}$, satisfying $a+d+ad+bc=0$, since $\det(I+M)=1$. 
If $l=1$, (i.e. $k'/k$ is simple of exponent $e=e_1$), then it is clear that $e(R)=e$. Suppose $e_2\leq e_1-1=e-1$. We may write $d=(1+a)^{-1}(a+bc)$, hence $t=a+d=(1+a)^{-1}(a^2+bc)$.  Therefore $t^{2^{e-1}}=(1+a)^{-2^{e-1}}(a^{2^e}+b^{2^{e-1}}c^{2^{e-1}})=(1+a)^{-2^{e-1}}(b^{2^{e-1}}c^{2^{e-1}})$. Now by Lemma \ref{lem:mstuff} we have $m^{2^{e-1}}\in \langle a_1^{2^{e-1}}\rangle\subseteq\mm$ for any $m\in\mm$. It follows that $b^{2^{e-1}}c^{2^{e-1}}$ has a factor of $a_1^{2^{e-1}}\cdot a_1^{2^{e-1}}=a_1^{2^e}=0$. By the same token, $t^{2^{e-1}}=0$ and hence, using the lemma, $M^{2^e}=0$. Therefore $e(R)=e$ as required.

Now suppose $e_1=e_2$. Since $G\subseteq \R_{k'/k}(\GL_2)$ we have $e(R)\leq e+1$. Set $b=a_1$, $c=a_2$, $d=0$ and $a=a_1a_2$. Then \[M^{2^e}=\begin{pmatrix}
(a_1a_2)^{2^e}+\sum_{i=1}^{e-1} (a_1a_2)^{2^i+2^e-2^{i+1}}  & a_1(a_1a_2)^{2^e-1}\\
a_2(a_1a_2)^{2^e-1} & \sum_{i=1}^{e-1} (a_1a_2)^{2^i+2^e-2^{i+1}}\\
\end{pmatrix}\]
The off-diagonal entries are zero, but $\sum_{i=1}^{e-1} (a_1a_2)^{2^i+2^e-2^{i+1}}=\sum_{i=1}^e (a_1a_2)^{2^e-2^{i}}$ and the $i=e-1$ entry is $(a_1a_2)^{2^{e-1}}$. Provided $e_2=e_1$, this element is non-zero by the same argument as in the proof of Lemma \ref{lem:mbound}---in a $k$-basis of $\B$, the coefficient of $1\otimes \alpha_1^{2^{e-1}}\alpha_2^{2^{e-1}}$ in $(a_1a_2)^{2^{e-1}}$ is evidently $1$. It follows that $M^{2^e}\neq 0$ and $e(R)=e+1$.
\end{proof}

\section{Simple groups of rank at least $2$}\label{sec:rk2}
For a positive integer $r$, recall that the algebraic $k$-group $\mu_r$ is the functor returning the $r$-th roots of unity in any $k$-algebra $A$; it is the kernel of the $r$-power map $\phi_r:\Gm\to\Gm;x\mapsto x^r$. (We have $\mu_r$ \'etale if and only if $p\nmid r$.) Say a $k'$-group $L$ is a \emph{skewed $\GL_n$} if $L\cong \GL_n/K$, where $K\cong\mu_r$, with $r\nmid n$; the quotient by $\mu_r$ induces the $r$-power map on the central torus of $\GL_n$. Since $K\cap\SL_n=1$, the derived subgroup $\D(L)$ of $L$ is still isomorphic to $\SL_n$ and notably there is still a central torus $C\subset L$ which contains $Z(\SL_n)$. In \cite[Ex.~5.17]{Tay19} it is explained that if $\mu_n\subset T$ is a finite subgroup of a split $k$-torus of rank at least $2$, then any automorphism of $\mu_n$ (which is some $r$-power map on $\mu_n$ with $(r,n)=1$) lifts to an automorphism of $T$. In our case, the quotient map by $K\times 1$ on $\GL_n\times \Gm$, lifts to an automorphism of $\GL_n\times \Gm$ defined on the central torus $\Gm\times\Gm$ by $(x,y)\mapsto (x^ry^{-n},x^by^a)$ where $a$ and $b$ are integers such that $ar+bn=1$. It follows that if $L$ is a skewed $\GL_n$-group, then $L\times \Gm\cong\GL_n\times\Gm$. 

The group $\SL_{n+1}$ contains a (Levi) subgroup $L:=\GL_n$ embedded block diagonally via $g\mapsto \diag(g,\det{g}^{-1})$. If $G'$ is a split simple $k$-group of type $A_n$ then $G'\cong \SL_{n+1}/\mu_r$ with $\mu_r\subseteq Z(\SL_{n+1})\cong\mu_{n+1}$, so $r\mid n+1$. But then $r\nmid n$, hence $L$ maps under the quotient by $\mu_r$ to a skewed $\GL_n$-subgroup of $G'$. This observation establishes the type $A$ case of the following lemma.

\begin{lem}\label{usefulsubgp}Let $G'$ be a split simple $k'$-group of rank $r$. Then one of the following holds:\begin{enumerate}\item $G'$ contains a skewed $\GL_r$ subgroup $L$;
\item There are \'etale isogenies $\pi:L\to L/Z$, and $\phi:\GL_r\to L/Z$ or $\phi:L\to\GL_r$;
\item $p=2$ and either $G'$ is of type $F_4$ or $G'$ is of type $B$, $C$ or $D$, but not isomorphic to  $\SO_{2r}$, $\SO_{2r+1}$ or $\Sp_{2r}$;
\item $p=3$ and $G$ is of type $E_6$. \end{enumerate}
In cases (i) and (ii) if we let $R=\RR_u(\R_{k'/k}(L)_{\bar k})$, then we have  $e(R)=e(\RR_u(\R_{k'/k}(\GL_r)_{\bar k}))$.
\end{lem}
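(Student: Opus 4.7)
The lemma has two independent parts: the classification that one of (i)--(iv) holds, and the exponent equality in cases (i) and (ii). I outline each in turn.

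For the classification, the argument is a case-by-case check on the Dynkin type of $G'$. The paragraph before the lemma handles type $A$, yielding case (i). For types $B$, $C$, $D$, one uses a Levi $\GL_r$ subgroup of the natural matrix realization ($\SO_{2r+1}$, $\Sp_{2r}$, $\SO_{2r}$); this gives case (i) when $G'$ is such a matrix group. For the other isogenous forms (for instance $\Spin$, $\HSpin$, $\PSp$), the central isogeny to the matrix group has $\mu_2$-kernel, étale exactly when $p\neq 2$, giving case (ii); when $p=2$ we are forced into case (iii). For $E_6$ a similar analysis applies via a full-rank subgroup coming from the appropriate subsystem and central $\mu_3$-isogenies between simply connected and adjoint forms, with case (iv) arising exactly in characteristic $3$. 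The remaining exceptional types admit analogous constructions always landing in (i) or (ii), except for $F_4$ in characteristic $2$, which is (iii).

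The exponent equality in case (ii) is immediate from Proposition~\ref{prop:kernelqG}(iv): chaining the given étale isogenies---either $\pi:L\to L/Z$ composed with $\phi:\GL_r\to L/Z$ through $L/Z$, or using $\phi:L\to\GL_r$ directly---produces an isomorphism between $\RR_u(\R_{k'/k}(L)_{\bar k})$ and $\RR_u(\R_{k'/k}(\GL_r)_{\bar k})$, so their exponents agree.

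Case (i) is the main obstacle, since the quotient map $\GL_r\to L=\GL_r/K$ with $K\cong\mu_s$ is not étale when $p\mid s$. I would bypass this using the isomorphism $L\times\Gm\cong\GL_r\times\Gm$ of $k'$-groups established just before the lemma statement. Since Weil restriction commutes with products and the formation of geometric unipotent radicals does too, this isomorphism yields $B\times C\cong A\times C$, where $A:=\RR_u(\R_{k'/k}(\GL_r)_{\bar k})$, $B:=\RR_u(\R_{k'/k}(L)_{\bar k})$, and $C:=\RR_u(\R_{k'/k}(\Gm)_{\bar k})$. To conclude $e(A)=e(B)$ from this, I need $C$ to embed in both $A$ and $B$: the diagonal $\Gm\hookrightarrow\GL_r$ and the central torus of $L$ (which is again $\Gm$, since $L=\GL_r/K$ with $K$ central) embed $\Gm$ as a reductive subgroup, so Proposition~\ref{prop:kernelqG}(iii) gives $C\hookrightarrow A$ and $C\hookrightarrow B$. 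Hence $e(A),e(B)\geq e(C)$, so $\max(e(A),e(C))=e(A)$ and $\max(e(B),e(C))=e(B)$. The isomorphism forces these maxima to be equal, giving $e(A)=e(B)$.
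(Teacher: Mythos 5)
Your proposal is correct and takes essentially the same route as the paper. Both split the classification by Dynkin type (Levi $\GL_r$ for the matrix forms of $B,C,D$, skewed $\GL_r$ from maximal-rank $A$-subsystems in the relevant exceptional cases, $\mu_2$ and $\mu_3$ central isogenies elsewhere), handle case (ii) via Proposition~\ref{prop:kernelqG}(iv), and handle case (i) by passing through $L\times\Gm\cong\GL_r\times\Gm$ and using that each factor already contains a central $\Gm$ so the exponent of the product equals the exponent of the non-$\Gm$ factor.
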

\begin{proof}If $G'=\SO_{2r}$, $\SO_{2r+1}$ or $\Sp_{2r}$ then \cite[4.5]{Jan04} promises a Levi subgroup $L\cong\GL_r$ as required so (i) holds in all these cases. The central isogenies $\Sp_{2r}\to\PSp_{2r}$, $\Spin_n\to \SO_n$, $\SO_n\to \PSO_n$, $\Spin_{2r}\to \HSpin_{2r}$ or $\HSpin_{2r}\to\PSO_{2r}$ (where $\HSpin_{2r}$ occurs only if $n$ even) are all quotients by central subgroups isomorphic to $\mu_2$, hence are \'etale maps when $p>2$ and hence (ii) holds for all classical groups when $p>2$. Otherwise (iii) holds.

Let now $G'$ be exceptional. Note that groups of type $E_7$ and $E_8$ contain simple $A_7$ and $A_8$ maximal rank subgroups, evident from the Borel--de-Siebenthal algorithm. Hence each contains a skewed $\GL_7$ and $\GL_8$ subgroup $L$ respectively, satisfying (i) as required. By \cite[Thm.~A]{Kle88}, any Levi subgroup $L$ of $G_2$ is isomorphic to $\GL_2$ (rather than $\SL_2\times \Gm$, for example), hence satisfies (i). If $G'$ is of type $F_4$, then  we take $L$ to be an $A_3$ Levi subgroup of the $\Spin_9$ subgroup of $G'$. Now $L$ admits an \'etale isogeny with the $\GL_4$ Levi subgroup of $\SO_9\cong\Spin_9/Z(\Spin_9)$ if $p\neq 2$; so $L$ satisfies (ii) in that case, and if $p=2$ then $L$ is listed in (iii). It remains to deal with the case $G'=E_6$. Using Bourbaki notation \cite{Bourb02}, we take $L$ to be the $A_5$-Levi subgroup corresponding to the nodes $\{1,3,4,5,6\}$ of the Dynkin diagram explicitly. Assume first $G'$ is simply connected. Then $\D(L)$ is simply connected. Since $L$ centralises the $\SL_2$ subgroup generated by the $\pm\tilde\alpha$ root groups, where $\tilde\alpha$ is the longest root, the central torus $C$ of $L$ is the image of the cocharacter $h_{\tilde\alpha}:\Gm\to G'; t\mapsto h_{\tilde\alpha}(t)$. One calculates $h_{\tilde\alpha}(t)=h_{\alpha_1}(t)h_{\alpha_2}(t^2)h_{\alpha_3}(t^2)h_{\alpha_4}(t^3)h_{\alpha_5}(t^2)h_{\alpha_6}(t)$. This is inside $\D(L)\cong\SL_6$ iff $t^2=1$; therefore $\D(L)\cap C\cong \mu_2$---so $L$ is not a skewed $\GL_r$. However, if $p\neq 3$ then there is an \'etale isogeny from $L$ to $L/Z\cong \GL_6$ where $Z$ is a diagonal copy of $\mu_3$ in both factors $\D(L)$ and $C$ of $L$, so that (ii) holds. Besides which, since $Z(G')\cong\mu_3$, then if $p\neq 3$, the quotient by $Z(G')$ is \'etale and $L/Z(G')$ satisfies (ii) for the adjoint group $G/Z(G')$. Otherwise $p=3$ and (iv) holds.

We prove the last statement. Suppose (ii) holds. Then we note that if $\pi:A\to B$ is an \'etale central isogeny of reductive groups, then $\R_{k'/k}(A)\to\R_{k'/k}(B)$ is an isogeny whose kernel has trivial intersection with the unipotent radical. As in Proposition \ref{prop:kernelqG}(iv), it follows that the induced map $\RR_u(\R_{k'/k}(A)_{\bar k})\to\RR_u(\R_{k'/k}(B)_{\bar k})$ is an isomorphism as required. If (i) holds, then $L$ is a skewed $\GL_r$-subgroup and by the remarks preceding the lemma, $L\times \Gm\cong \GL_r\times \Gm$. Observe $\RR_u(\R_{k'/k}(L\times \Gm)_{\bar k})\cong\RR_u(\R_{k'/k}(L)_{\bar k})\times \RR_u(\R_{k'/k}(\Gm)_{\bar k})$; since $L$ already contains $C\cong\Gm$, the exponent of the product is equal to that of the first factor, $e(R)$. The same argument with $\GL_r\times \Gm$ in place of $L\times \Gm$ implies the claim.
\end{proof}

\begin{proof}[Proof of Corollary \ref{rankbig2}] With the hypotheses on $p$, by Lemma \ref{usefulsubgp} we get a reductive subgroup $L$ of $G'$, with \[e(\RR_u(\R_{k'/k}(L))_{\bar k})=e(\RR_u(\R_{k'/k}(\GL_r))_{\bar k})\geq \E(k'/k,r).\] By Proposition \ref{prop:kernelqG}(iii) it follows that $e(R)\geq \E(k'/k,r)$. If $\sum_{i=r+1}^l(p^{e_i}-1)<r-1$, then Weil restricting a faithful representation $G'\to \GL_n$ from $k'$ to $k$ gives an embedding of $G$ in $\GL_m$ with $m=[k':k]\cdot n\geq r$. Now by Proposition \ref{prop:kernelqG}(iii) again, $e(R)\leq e(\RR_u(\R_{k'/k}(\GL_m))_{\bar k})$ which is equal to $\E(k'/k,m)=\Em(k'/k)$ by Theorem \ref{thm:1}.\end{proof}

\textbf{Acknowledgments:} We thank Michael Bate and Gerhard R\"ohrle for helpful comments on early versions of the paper. Many thanks to Jay Taylor for discussions about root data. Finally, thanks to the referee for suggesting improvements to the exposition.
\bibliographystyle{amsplain}
\bibliography{exp}
\end{document}